\definecolor{blue}{rgb}{0,0,0.9}
\definecolor{red}{rgb}{0.9,0,0}
\definecolor{green}{rgb}{0,0.9,0}
\newtheorem{lemma}{Lemma}
\newtheorem{theorem}{Theorem}
\def\lam{\lambda} \def\alp{\alpha}
\def\inprod#1#2{\langle#1,#2\rangle}
\def\cA{{\cal A}}
\def\cS{{\cal S}}
\newcommand{\ff}{\boldsymbol f}
\newcommand{\f}{\boldsymbol f}
\newcommand{\bP}{\boldsymbol P}
\newcommand{\bF}{\boldsymbol F}
\newcommand{\bv}{\boldsymbol v}
\newcommand{\bz}{\boldsymbol z}
\newcommand{\R}{\mathbb R}
\newcommand{\N}{\mathbb N}
\newcommand{\K}{\mathbf K}
\newcommand{\A}{\mathbf A}
\newcommand{\B}{\mathbf B}
\newcommand{\M}{\mathbf M}
\newcommand{\y}{\mathbf y}
\newcommand{\q}{\mathbf q}
\newcommand{\ms}{\mathcal{S}}
\newcommand{\blambda}{\mathbf \lambda}
\begin{document}

\title{A bounded degree SOS hierarchy for polynomial optimization\thanks{The work of the first author is partially 
supported by a PGMO grant from {\it Fondation Math\'ematique Jacques Hadamard, and 
a grant from the European Research Council (ERC-AdG: TAMING project).}}}

\author{
Jean B. Lasserre\thanks{LAAS-CNRS and Institute of Mathematics, University of
Toulouse, LAAS, 31031 Toulouse c\'{e}dex 4, France ({\tt lasserre@laas.fr}).},
\
 Kim-Chuan Toh\thanks{Department of Mathematics, National
         University of Singapore, 
        10 Lower Kent Ridge Road, Singapore
         119076 ({\tt mattohkc@nus.edu.sg}).
         } 
\ and
Shouguang Yang\thanks{Department of Mathematics, National University of
         Singapore, 10 Lower Kent Ridge Road, Singapore 119076 ({\tt matyash@nus.edu.sg}).         
        }
}
\maketitle
\date

\begin{abstract}
We consider a  new hierarchy of semidefinite relaxations
for the general polynomial optimization problem
$(P):\:f^{\ast}=\min \{\,f(x):x\in K\,\}$
on a compact basic semi-algebraic set $K\subset\R^n$.  
This hierarchy 
combines some advantages of the standard LP-relaxations associated with Krivine's positivity certificate
and some advantages of the standard SOS-hierarchy. In particular it has the following attractive features: (a) In contrast to
the standard SOS-hierarchy, for each relaxation in the hierarchy, the size of the 
matrix associated with the semidefinite constraint is the same and fixed in advance
by the user. (b) In contrast to the LP-hierarchy, finite convergence occurs at the first step of the hierarchy 
for an important class of convex problems. Finally (c) some important techniques 
related to the use of  point evaluations for declaring a polynomial to be zero
and to the use of rank-one matrices make an efficient implementation possible.
Preliminary results on a sample of non convex problems are encouraging.
\end{abstract}

\section{Introduction}

We consider the polynomial optimization problem:
\begin{equation}
\label{def-pb}
(P):\quad f^*=\displaystyle\min_x\:\{f(x)\::\: x\in\K\:\}
\end{equation}
where $f\in\R[x]$ is a polynomial and $\K\subset\R^n$ is the basic semi-algebraic set
\begin{equation}
\label{setk}
\K\,=\,\{\,x\in\R^n\::\: g_j(x)\,\geq\,0,\quad j=1,\ldots,m\},
\end{equation}
for some polynomials $g_j\in\R[x]$, $j=1,\ldots,m$. In order to approximate (and sometimes solve exactly)
$(P)$ one may instead solve a hierarchy of {\it convex relaxations} of $(P)$ of increasing sizes, namely for instance:

$\bullet$ {\it Semidefinite} relaxations based on Putinar's certificate of positivity on $K$ \cite{Putinar93},
where the $d$-th convex relaxation of the hierarchy is a semidefinite program 
{given by}
\begin{equation}
\label{put-cert}
\gamma_d=\max_{t,\sigma_j}\:\left\{\,t\::\: f-t=\sigma_0+\sum_{j=1}^m \sigma_j\,g_j\,\right\}.
\end{equation}
The unknowns $\sigma_j$ are sums of squares (SOS) polynomials with the degree bound constraint, ${\rm degree}(\sigma_jg_j)\leq 2d$, $j=0,\ldots,m$,
and the expression  in (\ref{put-cert}) is a certificate of positivity on $\K$ for the polynomial $x\mapsto f(x)-t$.\\

$\bullet$ {\it LP}-relaxations based on Krivine-Stengle's certificate of positivity on $\K$ \cite{Krivine64a,Stengle74},
where the $d$-th convex relaxation of the hierarchy is a linear program 
{given by}
\begin{equation}
\label{kriv-cert}
\theta_d\,=\,\max_{\lambda\geq0,t}\:\left\{t\::\:f-t\,=\,\sum_{(\alpha,\beta)\in\N^{2m}_{d}}\lambda_{\alpha\beta}\,\prod_{j=1}^m\left(g_j^{\alpha_j}\,(1-g_j)^{\beta_j}\right)\right\},
\end{equation}
where $\N^{2m}_{d}=\{(\alpha,\beta)\in\N^{2m}:\sum_j\alpha_j+\beta_j\leq d\}$. The unknown are $t$ and the nonnegative scalars $\lambda=(\lambda_{\alpha\beta})$, and
it is assumed that $0\leq g_j\leq 1$ on $\K$ (possibly after scaling) and the family $\{1,g_j\}$ generates the algebra $\R[x]$ of polynomials.
Problem (\ref{kriv-cert}) is an LP because stating that the two polynomials in both sides of ``$=$" are equal yields linear constraints 
on the $\lambda_{\alpha\beta}$'s.
For instance, the LP-hierarchy from Sherali-Adams RLT  \cite{SheraliAdams90} and their variants \cite{SheraliAdams99} are of this form.

In both cases, $(\gamma_d)$ and $(\theta_d)$, $d\in\N$, provide two monotone nondecreasing sequences of lower bounds 
on $f^\ast$ and if $\K$ is compact, then both converge to $f^\ast$ as one lets $d$ increases.
For more details as well as a comparison of such relaxations, 
the reader is referred to e.g. 
Lasserre \cite{lasserre-imperial,Lasserre02a} and Laurent \cite{Laurent05}, as well as Chlamtac and Tulsiani \cite{tulsiani} 
for the impact of LP- and SOS-hierarchies on approximation algorithms in combinatorial optimization.

Of course, in principle,  one would much prefer to solve LP-relaxations rather than semidefinite relaxations (i.e. compute
$\theta_d$ rather than $\gamma_d$) because present
LP-software packages can solve sparse 
problems with millions of variables and constraints, which is 
far from being the case for today's semidefinite solvers. And so the hierarchy (\ref{put-cert}) applies to problems of modest size only 
unless some sparsity or symmetry is taken into account in which case specialized variants can handle problems of much larger size; see e.g. Waki et al. \cite{waki}.
However, on the other hand, the LP-relaxations (\ref{kriv-cert}) suffer from several serious 
theoretical and practical drawbacks. For instance, it has been shown in \cite{Lasserre02a,lasserre-imperial} that the LP-relaxations 
{\it cannot} be exact for most convex problems, i.e.,  the sequence of the associated optimal values converges
to the global optimum only {\it asymptotically} and not in finitely many steps. Moreover, the LPs of the hierarchy are numerically ill-conditioned.
This is in contrast with the semidefinite relaxations (\ref{put-cert})
for which finite convergence takes place for convex problems where $\nabla^2f(x^\ast)$ is positive definite at every minimizer $x^\ast\in\K$
(see de~ Klerk and Laurent \cite[Corollary 3.3]{deklerk-convex}) and occurs at the first relaxation for SOS-convex\footnote{An SOS-convex polynomial is a convex polynomial whose Hessian factors as $L(x)L(x)^T$ for some rectangular matrix polynomial $L$.
For instance, separable convex polynomials are SOS-convex.} problems \cite[Theorem 3.3]{lasserre-convex}.
In fact, as demonstrated in recent  works of Marshall \cite{Marshall2} and Nie \cite{nie2012},
finite convergence is generic even for non convex problems.

\subsection{Contribution} 
This paper is in the vein of recent attempts in Lasserre \cite{lasserre-new} and Ahmadi and Majumdar \cite{ahmadi} 
to overcome the important  computational burden associated with the standard SOS-hierarchy (\ref{put-cert}).
In particular, in \cite{lasserre-new} we have suggested another hierarchy
of convex relaxations which combines some of the advantages of the SOS- and LP- hierarchies (\ref{put-cert}) and (\ref{kriv-cert}). 
In the present paper we take advantage of attractive features of the SDPT3 solver \cite{TTT99,TTT03} to provide an effective 
implementation of this new hierarchy. First preliminary tests on a sample
of non convex problems are encouraging and suggest that this new hierarchy might be efficient. This new hierarchy is 
another type of SOS-hierarchy labelled BSOS (for hierarchy with {\it bounded} degree SOS) with the following attractive features:

$\bullet$ In contrast to the standard SOS-hierarchy (\ref{put-cert}), for {\it each} semidefinite program in the hierarchy,
the size ${n+k\choose n}$ of the semidefinite matrix variable is now fixed, parametrized by an integer $k$ that one fixes in advance. This integer 
$k$ determines the degree of a certain SOS polynomial (for instance one may fix $k=2$), whence the label BSOS (for ``bounded"-SOS). 
Recall that in the standard SOS-hierarchy (\ref{put-cert})
the size of the semidefinite matrix variable is ${n+d\choose n}$ with rank $d$ in the hierarchy. 

$\bullet$ In contrast to the LP-hierarchy (\ref{kriv-cert}), {\it finite} convergence occurs at the first step in the hierarchy for a large class of convex 
problems; typically convex problems defined with convex quadratic polynomials or SOS-convex polynomials of degree at most $k$. 
Recall that such finite convergence is impossible for the LP-hierarchy (\ref{kriv-cert}).

$\bullet$ 
{Just as in}
 the standard SOS-hierarchy (\ref{put-cert}), there also exists a sufficient condition for finite convergence of the hierarchy. Namely
it suffices to check whether at an optimal solution of the corresponding SDP, some 
associated moment matrix is rank-one.

$\bullet$ Last but not least, to implement this hierarchy one uses
important techniques that dramatically alleviate the computational burden associated with a standard (careless) implementation.
Namely, (a) to declare that two polynomials are identical one uses that their values are equal on finitely many randomly chosen points
(instead of equating their coefficients), and (b) the SDP solver SDPT3 \cite{TTT99,TTT03} can be used to handle efficiently
some type of matrices used in our positivity certificate.

\paragraph{Preliminary computational experiments}
First we have compared our results with those obtained with the GloptiPoly software \cite{gloptipoly}
(devoted to solving the SOS-hierarchy (\ref{put-cert})) on a sample of non convex problems with up to $20$ variables.
For problems with low degree (in the initial data) and/or low dimension we obtain the global optimum whereas good lower bounds are always obtained for problems with high degee or higher dimension (e.g. problems with degree $4$ and up to $20$ variables). 

Next, we have also tested the LP-hierarchy (\ref{kriv-cert}) on a sample of convex problems and as expected the convergence is very poor and the resulting LPs become ill-conditioned. 
In addition, the LP can be expensive to solve as  the LP data is typically dense.
In contrast, the new hierarchy 
(with smallest value $k=1$ of its parameter) converges at the first step even though some of the problems 
are defined with polynomials of degree larger than $2$.

We have also considered a sample of non convex quadratic problems of the form $\inf \{x^TAx: x\in\Delta\}$ where $\Delta\subset\R^n$ is the canonical simplex and $A$ is a randomly generated real symmetric matrix with $r$ negative eigenvalues and $n-r$ positive eigenvalues. For all problems that could be solved with GloptiPoly (up to $n=20$ variables) we obtain the optimal values. For the other problems with $n=40, 50,100$ variables, only the first (dense) relaxation of GloptiPoly can be implemented
and yields only a lower bound on the global optimum. For those problems, 
a better lower bound is obtained in a reasonable amount of time by running the BSOS hierarchy. 

Finally we have considered the minimization of quadratic and quartic polynomials (with up to $40$ variables)
on the Euclidean unit ball intersected with the positive orthant. Again in those examples
only the first SDP-relaxation of GloptiPoly can be implemented, providing only a lower bound. In contrast BSOS
solves all problems at step $2$ of the hierarchy in a reasonable amount of time.

Of course this new hierarchy of semidefinite relaxations also has  its drawbacks (at least in its present version). Namely some submatrix (of the matrix used to describe the linear equality constraints of the resulting SDP) is fully dense and many of
these linear constraints are nearly dependent, which yields a lack of accuracy in the optimal solution when the order of relaxation $d$ is increased.

\section{Main result}

\subsection{Notation and definitions}
Let $\R[x]$ be the ring of polynomials in the variables
$x=(x_1,\ldots,x_n)$.
Denote by $\R[x]_d\subset\R[x]$ the vector space of
polynomials of degree at most $d$, which forms a vector space of dimension $s(d)={n+d\choose d}$, with e.g.,
the usual canonical basis $(x^\alpha)$ of monomials.
Also, denote by $\Sigma[x]\subset\R[x]$ (resp. $\Sigma[x]_d\subset\R[x]_{2d}$)
the space of sums of squares (SOS) polynomials (resp. SOS polynomials of degree at most $2d$). 
If $f\in\R[x]_d$, we write
$f(x)=\sum_{\alpha\in\N^n_d}f_\alpha x^\alpha$ in the canonical basis and
denote by $\f=(f_\alpha)\in\R^{s(d)}$ its vector of coefficients. 
Finally, let $\ms^n$ denote the space of 
$n\times n$ real symmetric matrices, with inner product $\langle \A,\B\rangle ={\rm trace}\,\A\B$. We use the notation 
$\A\succeq0$ (resp. $\A\succ0$) to denote that
 $\A$ is positive semidefinite (definite).  With $g_0:=1$, the quadratic module $Q(g_1,\ldots,g_m)\subset\R[x]$ generated by
polynomials $g_1,\ldots,g_m$, is defined by
\[Q(g_1,\ldots,g_m)\,:=\,\left\{\sum_{j=0}^m\sigma_j\,g_j\::\:\sigma_j\in\Sigma[x]\,\right\}.\]
We briefly recall two important theorems by Putinar \cite{Putinar93} and Krivine-Stengle \cite{Krivine64a,Stengle74} respectively,
on the representation of polynomials that are positive on $\K$.
\begin{theorem}
\label{th-prelim}
Let $g_0=1$ and  $\K$ in (\ref{setk}) be compact.

(a) If the quadratic polynomial $x\mapsto M-\Vert x\Vert^2$
belongs to $Q(g_1,\ldots,g_m)$ and if $f\in\R[x]$ is strictly positive on $\K$ then $f\in Q(g_1,\ldots,g_m)$.

(b) Assume that $0\leq g_j\leq 1$ on $\K$ for every $j$, and the family $\{1,g_j\}$ generates $\R[x]$. If $f$ is strictly positive on $\K$ then 
\[f\,=\,\sum_{\alpha,\beta\in\N^m}c_{\alpha\beta}\,\prod_{j}\left(g_j^{\alpha_j}\,(1-g_j)^{\beta_j}\right),\]
for some (finitely many) nonnegative scalars $(c_{\alpha\beta})$.
\end{theorem}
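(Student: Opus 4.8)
The plan is to treat both parts as instances of the same functional-analytic scheme: establish that the relevant convex cone is Archimedean, then separate a hypothetical positive polynomial from the cone by a linear functional, and finally represent that functional by a measure supported on $\K$ to reach a contradiction. For part (a), I would set $Q:=Q(g_1,\ldots,g_m)$ and first observe that the hypothesis $M-\Vert x\Vert^2\in Q$ forces $Q$ to be \emph{Archimedean}: for every $p\in\R[x]$ there is an integer $N$ with $N\pm p\in Q$. This follows because $M-\sum_i x_i^2\in Q$ bounds each coordinate, and sums, products, and scalings of bounded generators remain bounded in $Q$, so every polynomial is dominated by a constant.

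Next I would argue by contradiction. Suppose $f>0$ on $\K$ but $f\notin Q$. Since $Q$ is a convex cone in $\R[x]$, a Hahn--Banach separation argument (in the finest locally convex topology on $\R[x]$, with strict positivity of $f$ providing the needed margin against the closure of $Q$) yields a nonzero linear functional $L\colon\R[x]\to\R$ with $L\ge0$ on $Q$ and $L(f)\le0$; normalizing gives $L(1)=1$. The Archimedean property is exactly what makes $L$ bounded, since $N\pm p\in Q$ forces $|L(p)|\le N$; this lets me extend $L$ continuously to $C(\K)$ after identifying polynomials with their restrictions, and invoke the Riesz--Haviland representation theorem to write $L(p)=\int_{\K}p\,d\mu$ for a nonnegative Borel measure $\mu$. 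Because $L$ is nonnegative on each $\sigma_j g_j$, the measure is forced to be supported on $\K=\{x:g_j(x)\ge0\}$. But then $L(f)=\int_\K f\,d\mu\ge(\min_\K f)\,\mu(\K)>0$, contradicting $L(f)\le0$ and proving $f\in Q$.

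For part (b) I would run the analogous argument with the quadratic module replaced by the \emph{semiring} (preprime) $T$ of all finite nonnegative combinations $\sum c_{\alpha\beta}\prod_j g_j^{\alpha_j}(1-g_j)^{\beta_j}$, which is closed under addition and multiplication. The two standing hypotheses do the work: $0\le g_j\le1$ on $\K$ makes every generator, hence every product, lie in $[0,1]$ on $\K$, while the assumption that $\{1,g_j\}$ generates $\R[x]$ lets me express an arbitrary polynomial through the $g_j$ and thereby dominate it by a constant in $T$, so $T$ is Archimedean. I would then apply the Kadison--Dubois (Becker--Schwartz) representation theorem for Archimedean semirings: its character space is exactly $\K$ (the $g_j$ separate points, and the order structure recovers point evaluation), so $f>0$ on $\K$ yields $f\in T$, which is precisely the claimed representation.

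The main obstacle in both parts is the passage from the separating functional to an honest measure on $\K$ (equivalently, identifying the character space of $T$ with $\K$). This is where compactness and the Archimedean property are indispensable: without them the functional need not be generated by point evaluations, its support could escape $\K$, and the conclusion $L(f)>0$ would fail. Confirming that no mass sits where some $g_j$ is negative, so that the support is confined to $\K$ rather than to some larger set, is the delicate point; everything else reduces to bookkeeping on cones and the standard Riesz/Kadison--Dubois machinery.
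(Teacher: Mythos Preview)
The paper does not prove Theorem~\ref{th-prelim} at all: it is stated as background, with part~(a) attributed to Putinar \cite{Putinar93} and part~(b) to Krivine and Stengle \cite{Krivine64a,Stengle74}, and no argument is given. So there is no ``paper's own proof'' to compare your proposal against.

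That said, your outline is the standard modern route to these results and is essentially sound, with one point that deserves tightening. In part~(a) you invoke ``Riesz--Haviland'' to pass from the separating functional $L$ to a measure on $\K$, but Haviland's theorem requires $L\ge0$ on \emph{every} polynomial nonnegative on $\K$, whereas all you know is $L\ge0$ on the quadratic module $Q$. Bridging this gap is precisely the substance of Putinar's theorem; the Archimedean hypothesis is what does it, but not via Haviland directly. The honest argument here is either operator-theoretic (GNS construction from $L$, yielding commuting bounded self-adjoint multiplication operators whose joint spectrum lies in $\K$) or via a maximal-cone / Zorn's-lemma extraction of a point evaluation (Jacobi--Prestel). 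Your sketch gestures at the right ingredients---boundedness of $L$ from Archimedeanity, then support confinement from $L(\sigma g_j)\ge0$---but the phrase ``Riesz--Haviland'' undersells the work needed and risks circularity. For part~(b), invoking Kadison--Dubois (equivalently the Becker--Schwartz representation for Archimedean preprimes) is exactly right, and your identification of the character space with $\K$ via the generating hypothesis is the correct mechanism.
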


\subsection{The Bounded-SOS-hierarchy (BSOS)}

Consider the problem
\begin{eqnarray*}
 {(P):} \quad f^\ast = \min\{ f(x) \mid x\in \K\}
\end{eqnarray*}
where $K\subset\R^n$ is the basic semi-algebraic set defined in (\ref{setk}),
assumed to 
be compact. Moreover we also  assume that $g_j(x) \leq 1$ for all $x\in \K$ and $j=1,\dots,m$,
and $\{1,g_j\}$ generates the ring of polynomials $\R[x]$.  With $k\in\N$ fixed, consider the family  of optimization problems indexed by $d\in\N$:
\begin{equation}
\label{aux_{1}}
q_d^k := \sup_{\blambda,t}\,\{\, t \mid
 L_d(x,\lam) - t \in \Sigma[x]_{k}, \; \lam \geq 0\,\},\qquad d\in\N.
\end{equation}
Observe that when $k$ is fixed, then for each $d\in\N$:
\begin{itemize}
\item Computing $q^k_d$ in (\ref{aux_{1}}) reduces to  solving a semidefinite program and therefore
(\ref{aux_{1}}) defines a {\it hierarchy} of semidefinite programs because $q^k_{d+1}\geq q^k_d$ for all $d\in\N$.
\item The semidefinite constraint is
associated with the constraint  $L_d(x,\lam) - t \in \Sigma[x]_{k}$ and
the associated matrix has fixed size ${n+k\choose n}$, independent of $d\in\N$, a crucial feature for 
computational efficiency of the approach.
\item If $k=0$ then (\ref{aux_{1}}) is the linear program (\ref{kriv-cert}) and so $\theta_d=q^0_d\leq q^k_d$ for all $d,k$.
\end{itemize}

\paragraph{Interpretation.}
For any fixed $d\geq 1$, problem (P) is 
easily seen to be equivalent to the following problem by adding redundant constraints:
\begin{eqnarray*}
 (\tilde{P}): \quad f^\ast = \min\{ f(x) \mid  h_{\alp\beta}(x)\geq 0\;\; \forall \; (\alp,\beta) \in \N_d^{2m}\}
\end{eqnarray*}
where $\N_d^{2m} = \{ (\alp,\beta)\in \N^{2m} \mid |\alp|+|\beta| \leq d\}$
and $h_{\alp\beta}\in\R[x]$ is the polynomial
\[x\,\mapsto\,h_{\alpha\beta}(x)\,:=\,\prod_{j=1}^m g_j(x)^{\alp_j}(1-g_j(x))^{\beta_j},\qquad x\in\R^n.\]
Given $\blambda=(\lambda_{\alpha\beta})$, $(\alpha,\beta)\in\N^{2m}_d$, consider the Lagrangian function:
$$
 x\,\mapsto\,L_d(x,\lam) = f(x) - \sum_{(\alp,\beta)\in\N^{2m}_d}\lam_{\alp\beta} h_{\alp\beta}(x),\qquad x\in\R^n.
$$
The Lagrangian dual of $(\tilde{P})$ is given by 
$$
(\tilde{P}^\ast_d):\quad \sup_{\lambda}\:\{\,G_d(\lambda):\,\lambda\geq0\,\}
$$
where the function $G_d(\cdot)$ is given by:
$$
  \lambda\,\mapsto  G_d(\lambda) := \inf_{x\in\R^n} \{\,L_d(x,\lambda) \},\qquad\lambda\,\geq\,0.
$$

Now for a fixed $\lam$, the evaluation of $G_d(\lam)$ is computational intractable. However, 
let $k\in\N$ be fixed and
observe that
\begin{eqnarray*}
G_d(\lam) = \inf_{x\in \R^n} L_d(x,\lam) &=& \sup_{t}\,\{ \,t \mid L_d(x,\lam)-t \geq 0,\quad\forall\, x\,\}\\
& \geq& 
 \sup_{t}\,\{\, t \mid L_d(x,\lam)-t  \in \Sigma[x]_{k}\,\},
\end{eqnarray*}
where recall that $\Sigma[x]_{k}$ is the space of SOS polynomials of degree at most
$2k$. 
Moreover,
$$
f^\ast\,\geq\, \sup_{\lam \geq 0}\, G_d(\lam) \; \geq\,q^k_d,\qquad \forall\,d\in\N.
$$
So the semidefinite program (\ref{aux_{1}}) can be seen as a tractable simplification of the intractable problem: $\sup_{\lambda\geq0}G(\lambda)$. The linear program (\ref{kriv-cert}) (which is (\ref{aux_{1}}) with $k=0$) is an even more brutal simplification, so that $q^k_d\geq q^0_d=\theta_d$ for all $d,k$.
As a matter of fact we have the more precise and interesting result.

\begin{theorem}[\cite{lasserre-new}]
Let $\K\subset\R^n$ in (\ref{setk}) be compact with nonempty interior and 
$g_j(x) \leq 1$ for $x\in\K$ and $j=1,\dots,m$. Assume further
that the family $\{1,g_j\}$
generates the algebra $\R[x]$. Let $k\in\N$ be fixed and for each $d\in\N$, let $q^k_d$ be as in (\ref{aux_{1}}). Then:

(a) The sequence $(q^k_d)$, $d\in\N$, is monotone nondecreasing and $q^k_d\to f^\ast$ as $d\to\infty$.

(b) Moreover, assume that Slater's condition holds, i.e., there exists $x_0\in K$ such that
$g_j(x_0)>0$ for all $j=1,\ldots,m$. If $f$ and $-g_j$, $j=1,\ldots,m$,  are SOS-convex\footnote{A polynomial $f\in\R[x]$ is SOS-convex if its Hessian $\nabla^2 f$ is an SOS matrix, i.e., $\nabla^2f(x)=L(x)\,L(x)^T$ for some matrix polynomial  $L\in\R[x]^{n\times p}$ and some $p\in\N$.} polynomials of degree at most $2k$ then $q^k_1=f^*$, i.e., 
finite convergence takes places at the first relaxation in the hierarchy! In particular when $f,-g_j$ are convex quadratic polynomials then $q^1_1=f^*$.
\end{theorem}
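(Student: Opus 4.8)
The plan is to treat the two parts with different tools: part (a) via the Krivine--Stengle certificate of Theorem~\ref{th-prelim}(b), and part (b) via the calculus of SOS-convex polynomials. For (a), monotonicity is immediate from $\N^{2m}_d\subset\N^{2m}_{d+1}$: any feasible pair $(t,\lambda)$ for $q^k_d$ becomes feasible for $q^k_{d+1}$ after padding $\lambda$ with zeros, so $q^k_{d+1}\ge q^k_d$, while $q^k_d\le f^\ast$ is the inequality already recorded in the interpretation above. For convergence I would fix $\epsilon>0$ and observe that $f-(f^\ast-\epsilon)\ge\epsilon>0$ on $\K$; Theorem~\ref{th-prelim}(b) then supplies finitely many nonnegative scalars $c_{\alpha\beta}$ with $f-(f^\ast-\epsilon)=\sum_{\alpha,\beta}c_{\alpha\beta}\,h_{\alpha\beta}$. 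Taking $d$ large enough that all occurring indices lie in $\N^{2m}_d$ and setting $\lambda_{\alpha\beta}=c_{\alpha\beta}$, $t=f^\ast-\epsilon$ makes $L_d(x,\lambda)-t\equiv 0\in\Sigma[x]_k$; hence $q^k_d\ge f^\ast-\epsilon$, which monotonicity propagates to all larger $d$. Letting $\epsilon\downarrow 0$ gives $q^k_d\to f^\ast$.

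For (b) the goal is to exhibit, already at $d=1$, a feasible point with $t=f^\ast$. Since each $-g_j$ is SOS-convex (hence convex) and $f$ is convex, $(P)$ is a convex program, and Slater's condition yields KKT multipliers $\mu_j\ge 0$ at a minimizer $x^\ast$ satisfying $\nabla f(x^\ast)=\sum_j\mu_j\nabla g_j(x^\ast)$ and $\mu_j g_j(x^\ast)=0$. I would then form the Lagrangian $L(x):=f(x)-\sum_j\mu_j g_j(x)$, which is SOS-convex because a nonnegative combination of SOS-convex polynomials is SOS-convex, and which satisfies $L(x^\ast)=f^\ast$ and $\nabla L(x^\ast)=0$ by complementary slackness and stationarity. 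The decisive step is the lemma (established in \cite{lasserre-new,lasserre-convex}) that the first-order Taylor remainder of an SOS-convex polynomial is a sum of squares, i.e.\ $p(x)-p(a)-\nabla p(a)^T(x-a)\in\Sigma[x]$ for any SOS-convex $p$ and any $a$. Applying this with $p=L$ and $a=x^\ast$, and using $\nabla L(x^\ast)=0$, collapses the remainder to $L(x)-f^\ast\in\Sigma[x]$; since $\deg L\le 2k$ the square-summands have degree at most $k$, so in fact $L(x)-f^\ast\in\Sigma[x]_k$. Finally, choosing $\lambda_{e_j,0}=\mu_j$ (the weight of $g_j=h_{e_j,0}$) and all remaining $\lambda_{\alpha\beta}=0$ yields $L_1(\cdot,\lambda)=L$, so $(t,\lambda)=(f^\ast,\mu)$ is feasible for $q^k_1$ and $q^k_1\ge f^\ast$; together with the upper bound from (a) this gives $q^k_1=f^\ast$. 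The quadratic case is the specialization $k=1$, since convex quadratics have constant positive semidefinite Hessian and are therefore SOS-convex of degree $2$.

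The main obstacle is the SOS-representability of the remainder, and I would prove it directly. Writing $\nabla^2 p=L\,L^T$ and using the exact Taylor formula gives $p(x)-p(a)-\nabla p(a)^T(x-a)=\int_0^1(1-s)\,\Vert L(a+s(x-a))^T(x-a)\Vert^2\,ds$. Expanding $L(a+s(x-a))^T(x-a)=\sum_r s^r\,\mathbf{p}_r(x)$ in powers of $s$ turns the integral into $\sum_{r,r'} m_{r+r'}\,\mathbf{p}_r(x)^T\mathbf{p}_{r'}(x)$, where $m_j=\int_0^1(1-s)s^j\,ds$. The Hankel matrix $C=(m_{r+r'})$ is the moment matrix of the positive measure $(1-s)\,ds$ on $[0,1]$ and is therefore positive semidefinite; a Cholesky factorization $C=R^TR$ then exhibits the remainder as a sum of squares of polynomials in $x$. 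This is the only genuinely nontrivial ingredient; everything else is bookkeeping of degrees and of the correspondence between the KKT multipliers $\mu_j$ and the weights $\lambda_{\alpha\beta}$ at the first relaxation $d=1$.
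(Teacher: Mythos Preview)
Your proof is correct and follows the same overall route as the paper: part~(a) via the Krivine--Stengle certificate (the paper phrases it as the sandwich $f^\ast\ge q^k_d\ge q^0_d=\theta_d\to f^\ast$, which is exactly your argument specialized to $k=0$), and part~(b) via KKT multipliers at a minimizer together with the fact that an SOS-convex polynomial vanishing to first order at a point is an SOS polynomial. The one substantive difference is that the paper simply invokes Helton--Nie \cite[Lemma~8]{helton-nie} for this last fact, whereas you supply a self-contained proof via the integral Taylor remainder and positive semidefiniteness of the Hankel moment matrix of $(1-s)\,ds$ on $[0,1]$. Your argument is a valid and elementary substitute for the citation; it buys independence from the external reference at the cost of a few extra lines, while the paper's version keeps the proof to a paragraph by outsourcing the only nontrivial step.
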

\begin{proof}
The first result is a direct application of Theorem \ref{th-prelim}(b) since for any integer $d,k$,
$f^*\geq q^k_d\geq q^0_d=\theta_d$, and by Theorem \ref{th-prelim}, $\theta_d\to f^*$ as $d\to\infty$.
Next, if Slater's condition holds and $f$ and $-g_j$ are SOS-convex, $j=1,\ldots,m$,
then there exist nonnegative Lagrange-KKT multipliers $\lambda\in\R^m_+$ such that 
\[\nabla f(x^*)-\sum_j\lambda_j\,\nabla g_j(x^*)\,=\,0;\quad \lambda_j\,g_j(x^*)\,=\,0,\quad j=1,\ldots,m.\]
In other words, the Lagrangian $x\mapsto L(x):=f(x)-f^*-\sum_j\lambda_j\,g_j(x)$ is SOS-convex and satisfies
$L(x^*)=0$ and $\nabla L(x^*)=0$. By Helton and Nie \cite[Lemma 8, p. 33]{helton-nie}, $L$ is SOS (of degree at most $2k$).

\end{proof}
\subsection{The SDP formulation of (\ref{aux_{1}})}

To formulate (\ref{aux_{1}}) as a semidefinite program one has at least two possibilities depending on how we state that 
two polynomials $p,q\in\R[x]_d$ are identical. Either by {\it equating their coefficients} (e.g. in the monomial basis), i.e.,
$p_\alpha=q_\alpha$ for all $\alpha\in\N^n_d$, or by {\it equating their values} on ${n+d\choose n}$ 
generic points (e.g. randomly generated on  the box $[-1,1]^n$). In the present context
of (\ref{aux_{1}})  we prefer the latter option 
since expanding the polynomial $h_{\alpha\beta}(x)$ symbolically to
get the coefficients with respect to the monomial basis can be expensive
and memory intensive. 

Let $\tau = \max\{{\rm deg}(f), 2k, d\max_{j}\{{\rm deg}(g_j)\}\}$. 
Then for $k$ fixed and for each $d$, we get
\begin{eqnarray}
\nonumber
q_d^k &=& \sup\Big\{ t \mid f(x) - t -\sum_{(\alp,\beta)\in\N^{2m}_{d}} \lam_{\alp\beta}\,h_{\alp\beta}(x) = \inprod{Q}{v_k(x)v_k(x)^T};
\\[5pt]
&&\hspace{6cm} Q \in \cS^{s(k)}_+,\lam\geq 0\Big\}
\label{first} \\[5pt]
&=& \sup\left\{ t\; \Big| 
\begin{array}{l}
f(x^{(p)}) = t +\sum_{(\alp,\beta)\in\N^{2m}_{d}} \lam_{\alp\beta}\,h_{\alp\beta}(x^{(p)}) 
+ \inprod{Q}{v_k(x^{(p)})v_k(x^{(p)})^T}, 
\\[5pt] 
p=1,\dots,L, \; Q \in \cS^{s(k)}_+,\; \lam \geq 0,\; t\in\R
\end{array}
\right\}
\label{eq-1}
\end{eqnarray}
where $L:=|\N^{n}_{\tau}|={n+\tau\choose n}$ and
$\{ x^{(p)}\in \R^n \mid p =1,\dots, L\}$ are randomly selected points
in $[-1,1]^n$; 
$s(k) = {n+k\choose k}$, and $v_k(x)$ is a vector 
of polynomial basis for $\R[x]_k$, the space of polynomials of degree at most $k$. 
To be rigorous, if the optimal value $q_d^k$ of (\ref{first}) is finite then  the above equality holds with probability one and every optimal solution $(q^k_d,\lambda^*,Q^*)$ of (\ref{first}) is also an optimal solution of (\ref{eq-1}).

\subsection{Sufficient condition for finite convergence}
\label{subsec-sufficient-condition}

By looking at the dual of the semidefinite program (\ref{eq-1}) one obtains a sufficient condition for finite convergence.
To describe the dual of  the semidefinite program (\ref{eq-1}) we need 
to introduce some notation.

For every $p=1,\ldots,L$, denote by $\delta_{x^{(p)}}$ the Dirac measure at the point $x^{(p)}\in \R$
and let $\langle q,\delta_{x^{(p)}}\rangle =q(x^{(p)})$ for all $p=1,\ldots,L$, and all $q\in\R[x]$.

With a real sequence $\y=(y_\alpha)$, $\alpha\in\N^n_{2\ell}$, denote by 
$\M_\ell(\y)$ the moment matrix associated with $\y$. It is a real symmetric matrix with rows and columns indexed by $\N^n_\ell$, and with entries
\[\M_\ell(\y)(\alpha,\beta)\,=\,y_{\alpha+\beta},\qquad \forall\,\alpha,\beta\in\N^n_\ell.\]
Similarly, for $j=1,\ldots,m$, letting $g_j(x)=\sum_\gamma g_{j\gamma}x^\gamma$, denote by $\M_\ell(g_j\,\y)$ the localizing matrix associated 
with $\y$ and $g_i\in\R[x]$. Its rows and columns are also indexed by $\N^n_\ell$, and
with entries
\[\M_\ell(g_j\,\y)(\alpha,\beta)\,=\,\sum_{\gamma\in\N^n}g_{j\gamma}\,y_{\alpha+\beta+\gamma},\qquad \forall\,\alpha,\beta\in\N^n_\ell.\]
The dual of  the semidefinite program (\ref{eq-1}) reads:
\begin{equation}
\label{aux-dual}
\begin{array}{rl}\tilde{q}_d^k := \displaystyle\inf_{\theta\in\R^L}&\displaystyle\sum_{p=1}^L \theta_p\,\langle f,\delta_{x^{(p)}}\rangle\\
\mbox{s.t.} &  \displaystyle\sum_{p=1}^L\theta_p\,(v_k(x^{(p)})\,v_k(x^{(p)})^T)\succeq0\\
&\displaystyle\sum_{p=1}^L \theta_p\,\langle h_{\alpha\beta},\delta_{x^{(p)}}\rangle \geq 0,\quad (\alpha,\beta)\in\N^{2m}_d\\
&\displaystyle\sum_{p=1}^L \theta_p=1.
\end{array}
\end{equation}
(Notice that the weights $\theta_p$ are not required to be nonnegative.) By standard weak duality in convex optimization, and for every fixed $k\in\N$, one has
\[f^*\,\geq\,\tilde{q}^k_d\,\geq\,q^k_d,\qquad \forall d\in\N.\]
(In full rigor, for each fixed $d$ the above inequality holds with probability one.)
Next, let $s\in\N$ be the smallest integer such that $2s\geq \max[{\rm deg}(f);\,{\rm deg}(g_j)]$, and let
$r:=\max_j\lceil{\rm deg}(g_j)/2\rceil$.
We have the following {\it verification} lemma.
\begin{lemma}
\label{lemma-dual}
Let $K$ in (\ref{setk}) be compact with nonempty interior and assume that there exists $x_0\in K$ such that
$0<g_j(x_0)<1$ for all $j=1,\ldots,m$.

(a) For every $d$ sufficiently large (say $d\geq d_0$), the semidefinite program (\ref{first}) has an optimal solution.
Hence for each fixed $d\geq d_0$, with probability one the semidefinite program (\ref{eq-1}) has also an optimal solution.

(b) Let $\theta^*\in\R^L$ be an optimal solution of (\ref{aux-dual}) (whenever it exists) and let $\y^*=(y^*_\alpha)$, $\alpha\in\N^n_{2s}$, with
\begin{equation}
\label{y-star}
y^*_\alpha\,:=\,\sum_{p=1}^L \theta^*_p\,(x^{(p)})^\alpha,\qquad \alpha\in\N^n_{2s}.\end{equation}
\indent
$\bullet$ If ${\rm rank}\,\M_s(\y^*)=1$ then $\tilde{q}^k_d=f^*$ and
$x^*=(y^*_\alpha)$, $\vert\alpha\vert=1$, i.e.,
$x^*=\sum_{p=1}^L\theta^*_p\,x^{(p)}$,
is an optimal solution of problem $(P)$.

$\bullet$ If $\M_s(\y^*)\succeq0$, $\M_{s-r}(g_j\,\y^*)\succeq0$, $j=1,\ldots,m$, and 
${\rm rank}\,\M_s(\y^*)={\rm rank}\,\M_{s-r}(\y^*)$,
then $\tilde{q}^k_d=f^*$ and problem $(P)$ has ${\rm rank}\,\M_{s}(\y^*)$ global minimizers that can be extracted by a linear algebra procedure.
\end{lemma}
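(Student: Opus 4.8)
The plan is to read the dual (\ref{aux-dual}) as a \emph{truncated moment problem}. For a truncated sequence $\y$ introduce the Riesz functional $L_{\y}(p):=\sum_\alpha p_\alpha\,y_\alpha$, and let $\y^*$ be as in (\ref{y-star}). Then the objective of (\ref{aux-dual}) is exactly $\sum_p\theta^*_p f(x^{(p)})=L_{\y^*}(f)$; the semidefinite constraint $\sum_p\theta^*_p\,v_k(x^{(p)})v_k(x^{(p)})^T\succeq0$ is precisely $\M_k(\y^*)\succeq0$, since its $(\alpha,\beta)$ entry is $\sum_p\theta^*_p (x^{(p)})^{\alpha+\beta}=y^*_{\alpha+\beta}$; the scalar constraints read $L_{\y^*}(h_{\alpha\beta})\geq0$; and $\sum_p\theta^*_p=1$ gives $y^*_0=1$. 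I will use throughout the weak-duality chain $f^*\geq\tilde q^k_d\geq q^k_d$ already recorded above, so that in (b) it suffices to exhibit a feasible point of $(P)$ whose value equals $\tilde q^k_d$.

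For part (a) I would first settle feasibility and finiteness of (\ref{first}). Fixing any $t_0<f^*$, the polynomial $f-t_0$ is strictly positive on $\K$, so by Theorem \ref{th-prelim}(b) it equals $\sum_{(\alpha,\beta)}\lambda_{\alpha\beta}h_{\alpha\beta}$ with $\lambda\geq0$ for all $d$ past some $d_0$; taking $Q=0$ shows (\ref{first}) is feasible with $q^k_d\geq t_0$, while the interpretation already gives $q^k_d\leq f^*$, so the value is finite. Attainment is the crux: I would obtain it from conic duality by checking that the dual of (\ref{first}) is \emph{strictly} feasible. Here the hypothesis $0<g_j(x_0)<1$ is decisive: since $\K$ has nonempty interior and the $g_j$ are continuous, there is a ball $B$ around $x_0$ on which $0<g_j<1$ for all $j$, whence every $h_{\alpha\beta}>0$ on $B$. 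The normalized restriction $\mu$ of Lebesgue measure to $B$ has moment sequence $\y$ with $\M_k(\y)\succ0$ (for $0\neq v$, $v^T\M_k(\y)v=\int_B(\sum_\alpha v_\alpha x^\alpha)^2\,d\mu>0$, as no nonzero polynomial vanishes on an open set) and $L_{\y}(h_{\alpha\beta})=\int_B h_{\alpha\beta}\,d\mu>0$ for every $(\alpha,\beta)$, i.e. a Slater point. Strict feasibility of the dual together with finiteness of the common value forces the primal supremum (\ref{first}) to be attained. Finally, since (\ref{first}) and (\ref{eq-1}) coincide with probability one and share optimal solutions, solvability of (\ref{eq-1}) follows w.p.\ $1$.

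For the first bullet of (b) I would argue directly. A real symmetric rank-one matrix with positive $(0,0)$ entry is positive semidefinite; since $\M_s(\y^*)$ has rank one and $(0,0)$ entry $y^*_0=1$, it equals $v_s(x^*)v_s(x^*)^T$ with $x^*=(y^*_\alpha)_{|\alpha|=1}=\sum_p\theta^*_p x^{(p)}$, so $y^*_\alpha=(x^*)^\alpha$ for all $|\alpha|\leq2s$. Taking the index $(\alpha,\beta)=(e_j,0)$, for which $h_{\alpha\beta}=g_j$, and using $2s\geq\deg g_j$, the constraint $L_{\y^*}(g_j)\geq0$ reads $g_j(x^*)\geq0$; hence $x^*\in\K$ and $f(x^*)\geq f^*$. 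On the other hand $2s\geq\deg f$ gives $\tilde q^k_d=L_{\y^*}(f)=f(x^*)$, and with $\tilde q^k_d\leq f^*$ this forces $f(x^*)=f^*=\tilde q^k_d$, so $x^*$ is a global minimizer.

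For the second bullet I would invoke the flat-extension machinery of Curto--Fialkow (as packaged in the Henrion--Lasserre extraction procedure). The hypotheses $\M_s(\y^*)\succeq0$, $\M_{s-r}(g_j\,\y^*)\succeq0$ for all $j$, and $\mathrm{rank}\,\M_s(\y^*)=\mathrm{rank}\,\M_{s-r}(\y^*)$ are exactly those guaranteeing that $\y^*$ admits a representing measure $\mu=\sum_{i=1}^N\omega_i\,\delta_{z_i}$ with $N=\mathrm{rank}\,\M_s(\y^*)$ atoms, $\omega_i>0$, $\sum_i\omega_i=1$, and $z_i\in\K$ (the localizing conditions place the support in $\K$), the $z_i$ being recoverable by linear algebra. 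Since $\deg f\leq2s$, $\tilde q^k_d=L_{\y^*}(f)=\int f\,d\mu=\sum_i\omega_i f(z_i)\geq f^*$; combined with $\tilde q^k_d\leq f^*$ this yields $\tilde q^k_d=f^*$ and $\sum_i\omega_i\,(f(z_i)-f^*)=0$ with nonnegative summands, so $f(z_i)=f^*$ for every atom and the $N$ points $z_i$ are global minimizers. The main obstacle I anticipate is the attainment step in (a)---pinning down solvability and a zero duality gap rather than merely a bound---while in (b) the delicate part is purely bookkeeping: ensuring that the relevant polynomials ($f$, each $g_j$, each $h_{e_j,0}$) have degree $\leq2s$ so that $L_{\y^*}$ acts as genuine integration against the extracted atoms.
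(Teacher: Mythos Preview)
Your proposal is correct and follows essentially the same route as the paper: for (a) you build a Slater point for the dual of (\ref{first}) from a probability measure with full-dimensional support (you use normalized Lebesgue measure on a ball $B$ around $x_0$ where $0<g_j<1$, while the paper uses normalized Lebesgue measure on $\K$ and then restricts to an open set $O$---both give $\M_k(\y)\succ0$ and $L_\y(h_{\alpha\beta})>0$), then combine this with finiteness via Theorem~\ref{th-prelim}(b) to get primal attainment; for (b) you argue exactly as the paper does, reading the rank-one case directly and invoking the flat-extension theorem (cited in the paper as \cite[Theorem 3.11]{lasserre-imperial}) for the general case. The degree bookkeeping you flag---that only those $h_{\alpha\beta}$ with $\deg h_{\alpha\beta}\leq 2s$ (in particular $h_{e_j,0}=g_j$) can be pushed through $L_{\y^*}$---is precisely the care the paper takes as well.
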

The proof is postponed to the Appendix.
Notice that we do not claim that problem (\ref{aux-dual}) has always an optimal solution. 
Lemma \ref{lemma-dual} is a {\it verification lemma} (or a stopping criterion) based on some sufficient rank condition on $\M_s(\y^*)$ and $\M_{s-r}(\y)$, provided that an optimal solution $\y^*$ exists. When the latter condition holds true
then $f^*=\tilde{q}^k_d$ and we can stop as at least one global optimal solution $x^*\in K$ has been identified.

\subsection{On the rank-one matrices of (\ref{aux_{1}}) and SDPT3}

Note that in the SDP \eqref{eq-1}, the constraint matrices associated with $Q$ 
are all dense rank-1 matrices of the form $A_p = v_k(x^{(p)})v_k(x^{(p)})^T$. Thus
if we let $\bv_p = v_k(x^{(p)})$, then 
the linear maps involved in the equality constraints of the SDP can be 
evaluated cheaply based on the following formulas:
$$
 \cA(X) := \Big[ \inprod{A_p}{X} \Big]_{p=1}^L = \Big[ \inprod{\bv_p}{ X  \bv_p}\Big]_{p=1}^L,\quad
 \cA^\ast y := \sum_{y=1}^{L} y_p A_p = V\,{\rm Diag}(y) V^T
$$
where $X\in \cS^{s(k)}$, $y\in \R^L$, $V=[\bv_1,\dots,\bv_L]\in \R^{s(k)\times L}$.
Moreover, one need not store the dense constraint 
matrices $\{ A_p \mid p=1,\dots,L\}$ but only the vectors
$\{ \bv_p\mid p=1,\dots,L\}.$
{\it To solve the SDP \eqref{eq-1} efficiently, we need to exploit
the rank-$1$ structure of the constraint matrices during the iterations.}
Fortunately, the SDPT3 solver \cite{TTT99,TTT03} based on interior point methods
has already been designed to exploit such a rank-1 structure
to minimize the memory needed to store the constraint matrices,
as well as to minimize the computational cost required to 
compute the Schur complement matrix arising in each interior-point
iteration. 
More precisely, in each iteration where a positive definite matrix
$W\in \cS^{s(k)}$ is given, one needs to compute the Schur 
complement matrix ${\bf S}$ whose $(p,q)$ element is given by
$$
{\bf S}_{pq} = \inprod{A_p}{WA_q W} = \inprod{\bv_p\bv_p^T}{W\bv_q\bv_q^TW}
 = \inprod{\bv_p}{W\bv_q}^2, \quad p,q = 1,\dots,L.
$$

It is the combination of these two implementation techniques
(point evaluation in the formulation and exploiting rank-one structure in the 
interior point algorithm) that makes 
our implementation of the SOS-hierarchy (\ref{aux_{1}}) efficient.

\section{Computational issues}

Given $f\in \R[x]_d$, 
in order to efficiently evaluate the vector $f(x^{(p)})$, $p=1,\dots, L$, we need a
convenient representation of the polynomial $f(x)$. In our implementation of 
BSOS, we use the following
data format to input a polynomial: 
\begin{eqnarray*}
 \bF(i,1:n+1) = [\alpha^T, f_\alpha]
\end{eqnarray*}
where $f_\alpha$ is the $i$th coefficient corresponding to the 
monomial $x^\alpha$. Note that the enumeration of the coefficients 
of $f(x)$ is not important. For a given point $z\in \R^n$ such that $z_i\not=0$ for all $i=1,\dots,n$, we evaluate $f(z)$
via the following procedure written in {\sc Matlab} syntax:
\begin{description}
\item[Step 1.] Set
$\bP = \bF(:,1:n)$, $\ff=\bF(:,n+1)$, and
  $s = (s_1,\dots, s_n)^T$, where
$s_i = 1$ if $z_i < 0$, and $s_i=0$ if $z_i \geq 0$. 
\item[Step 2.] Compute $\bar{s} = {\tt rem}(\bP s,2)$ and 
$\bz=\exp(\bP\log|z|)$.
\item[Step 3.] Compute
$f(z) = \inprod{\ff^{(a)}}{\bz^{(a)}} - \inprod{\ff^{(b)}}{ \bz^{(b)}}$,
where $\ff^{(a)} =\ff({\tt find}(\bar{s}==0))$ and 
$\ff^{(b)} =\ff({\tt find}(\bar{s}==1)).$
\end{description}
(The above procedure can be modified slightly to handle the case when $z$ has 
some zero components.)
Note that in the above procedure, 
$ \inprod{\ff^{(a)}}{\bz^{(a)}}$ and $ \inprod{\ff^{(b)}}{ \bz^{(b)}} $
correspond to the sum of positive terms and sum of negative terms in the evaluation
of $f(z)$. By separating the summation of the positive and negative 
terms in the evaluation of $f(z)$, it is hoped that cancellation errors
can be minimized. 

We should mention that some of the equality constraints in \eqref{eq-1}
may be redundant. For the sake of reducing the computational cost and
improve the numerical stability, we remove these redundant constraints 
before solving the SDP. However, as $d$ increases, the linear constraints
would become more and more nearly dependent, and typically
the SDP problem cannot be solved accurately by either SDPT3 or SEDUMI.

Another numerical issue which we should point out is that 
the constraint matrix 
$$
\left[\begin{array}{c}
\big(h_{\alpha\beta}(x^{(1)})\big)_{(\alpha,\beta)\in \N^{2m}_d} \\
\vdots \\ 
\big(h_{\alpha\beta}(x^{(L)})\big)_{(\alpha,\beta)\in \N^{2m}_d}
\end{array}\right]
$$
associated with the nonnegative vector $(\lam_{\alpha\beta})$ is typically fully dense.
Such a matrix would consume too much memory and also computational
cost when $d$ increases or when $m$ is large.


\section{Numerical experiments}

We call our approach BSOS (for hierarchy with bounded degree SOS).
As mentioned in the Introduction, we conduct experiments 
on three classes of problems which will be described
in the ensuing subsections.

\subsection{Comparison of BSOS with Gloptiploy }

We construct a set of test functions with 5 constraints. 
The test functions are mainly generated based on the following two problems:
\begin{eqnarray*}
 \begin{array}{rrrrrrrrrl}
 (P_1)         
&    &           f =&x_1^2 &- x_2^2   &              &+x_3^2   &- x_4^2  & +x_1  &-x_2\\[5pt]
\mbox{s.t.} 
&0 &\leq g_1=&2x_1^2 &+3x_2^2 &+2x_1x_2 &+2x_3^2 &+3x_4^2 &+2x_3x_4 & \leq 1\\[3pt]
&0 &\leq g_2=&3x_1^2 &+2x_2^2 &-4x_1x_2 &+3x_3^2  &+2x_4^2 &-4x_3x_4 &\leq 1\\[3pt]
&0 &\leq g_3=&x_1^2  &+6x_2^2 &-4x_1x_2 &+x_3^2   &+6x_4^2 &-4x_3x_4 &\leq 1\\[3pt]
&0 &\leq g_4=&x_1^2  &+4x_2^2 &-3x_1x_2 &+x_3^2    &+4x_4^2 &-3x_3x_4 &\leq 1\\[3pt]
&0 &\leq g_5=&2x_1^2&+5x_2^2 &+3x_1x_2 &+2x_3^2 &+5x_4^2 &+3x_3x_4 &\leq 1\\[3pt]
&0& \leq x. \quad
\end{array}
\end{eqnarray*}
The optimal value of $(P_1)$ is $f(x^*)=-0.57491$, as computed by GloptiPoly3. 
For BSOS, we get the result $q^{k=1}_{d=1} = -0.57491$, 
which is the exact result.

The second problem is : 
\begin{eqnarray*}
&&
\begin{array}{rrrrrrl}
(P_2)  
&  &   f           =&x_{1}^4x_{2}^2 &+x_{1}^2x_{2}^4 &-x_{1}^2x_{2}^2 \\[5pt]
\mbox{s.t.}
&0 &\leq g_1 =&x_{1}^2    &+x_{2}^2    &                   &\leq 1\\[3pt]
&0 &\leq g_2=&3x_{1}^2   &+2x_{2}^2  &-4x_{1}x_{2}&\leq 1\\[3pt]
&0 &\leq g_3=&x_{1}^2    &+6x_{2}^4   &-8x_{1}x_{2}+2.5&\leq 1\\[3pt]
&0 &\leq g_4=&x_{1}^4    &+3x_{2}^4  &                  & \leq 1\\[3pt]
&0 &\leq g_5=&x_{1}^2    &+x_{2}^3   &                   & \leq 1 
\end{array}
\\[3pt]
&&\qquad\quad\; 0 \leq x_1,\quad 0 \leq x_2. 
\end{eqnarray*}
The optimal value of $(P_2)$ is $f(x^*)=-0.037037$, as computed by GloptiPoly3. 
The results obtained by BSOS are
\begin{eqnarray*}
\begin{array}{lll}
q^{k=3}_{d=1} = -0.041855, &q^{k=3}_{d=2} = -0.037139, 
&q^{k=3}_{d=3} =  -0.037087 \\[5pt]
q^{k=3}_{d=4} =  -0.037073, & q^{k=3}_{d=5} = -0.037046\\[5pt]
q^{k=4}_{d=1} = -0.038596, &q^{k=4}_{d=2} = -0.037046, 
&q^{k=4}_{d=3} =  -0.037040\\[5pt]
q^{k=4}_{d=4} =-0.037038, & q^{k=4}_{d=5} = -0.037037.
\end{array}
\end{eqnarray*}
Based on the above two problems, we increase the degree of the objective function and constraint functions to generate other test instances which are 
given explicitly in the Appendix. 

Table \ref{table1} compares the results obtained by 
 BSOS and GloptiPoly3 for the tested instances. 
We observe that BSOS can give the exact result for those problems with either low degree or low dimension, while also providing a good lower bound for high degree and high dimensional problems. 
In particular on this sample of problems, $k$ is chosen so that
the size of the semidefinite constraint (which is ${n+k\choose k}$) is the same as the one needed in GloptiPoly,
 to certify global optimality. Then notice that BSOS succeeds in finding the optimal value even though the positivity certificate used in (\ref{eq-1}) is not Putinar's certificate (\ref{put-cert}) used in GloptiPoly. 
 In addition, for most of test problems, BSOS can usually get better bounds
as $d$  increases, and in most cases, the bound is good enough for small $d=2,3$. 

In Table  \ref{table1}, we also use the sufficient condition 
stated in Lemma \ref{lemma-dual} to check whether the generated
lower bound is indeed optimal. For quite a number of instances, 
 the moment matrix $\M_\ell(\y^*)$ associated with 
the optimal solution $\theta^*$ of \eqref{aux-dual} indeed has numerical rank equal to one (we declare that the matrix has numerical rank equal to one if the largest
eigenvalue is at least $10^4$ times larger than the second largest eigenvalue), 
which certifies that the lower bound is actually the optimal value.
We should note that for some of the instances, although the lower bound
is actually the optimal value (as declared by GloptiPoly), but the rank of the 
moment matrix $\M_\ell(\y^*)$ is larger than one.

\begin{table} 
\begin{center}
\begin{scriptsize}
\caption{Comparison of BSOS and GloptiPoly3. An entry marked with 
``$*$" means that the correspond SDP was not solved to high accuracy. \label{table1}}
\begin{tabular}{ | c ||  c |c| c|@{}c@{}|| c | c |@{}c@{}|}
    \hline
     Problem
&\multicolumn{4}{|c||}{BSOS} & \multicolumn{3}{|c|}{GloptiPoly3} \\  \cline{2-8} 
    & $(d,k)$   &Result & Time(s) &rank$(\M(\y^*))$
 &Result & Time(s) &Order,Optimal\\ \hline

       {\tt P4\_2}     &1,1 &  -6.7747e-001   & 0.3 &1
&  -6.7747e-001 & 0.2 &1,yes\\  
                             &2,1& -6.7747e-001   & 0.5  & 1 &&&
\\     \hline
       {\tt P4\_4}     & 1,2 &-2.9812e-001  & 0.5   &7
& -3.3539e-002 & 0.3 &2,yes\\ 
                             & 2,2 & -3.3539e-002 & 0.6   & 4 &&&
 \\  \hline

     {\tt P4\_6}    & 1,3 &-6.2500e-002   & 0.8    &31
&  -6.0693e-002 & 0.5&3,yes \\
                             & 2,3 & -6.0937e-002 & 0.9   &7 &&&\\
                             & 3,3 & -6.0693e-002 &1.8    &4 &&&
\\ \hline
    {\tt P4\_8}    & 1,4 &-9.3354e-002$*$  &  3.2   & $>10$
& -8.5813e-002  & 2.6 &4,yes\\  
                             & 2,4& -8.5813e-002   &  3.7   &9  &&&\\  
                             & 3,4 &-8.5813e-002   &  5.1   &4  &&&
\\    \hline
       {\tt P6\_2}    & 1,1 & -5.7491e-001         &  0.3  &1
 &   -5.7491e-001   & 0.2 &1,yes\\
                             & 2,1 & -5.7491e-001         &  0.8  &1 &&&
\\ \hline
    {\tt P6\_4}    & 1,2 &-5.7716e-001    &   1.1 &10
 &  -5.7696e-001  & 0.3  &2,yes\\ 
                             & 2,2& -5.7696e-001    &   1.1  &4  &&&\\ 
                             & 3,2 & -5.7696e-001    &  4.3  &1 &&&\\
 \hline
    {\tt P6\_6}     & 1,3 &-6.5972e-001  & 7.1  &  $>10$
&  -4.1288e-001  & 6.4 &3,yes\\
                             & 2,3 &-6.5972e-001  &10.2  & $>10$   &&&  \\
                             & 3,3 &-4.1288e-001  &32.0  &1 &&&  \\
\hline
     {\tt P6\_8}     & 1,4 &-6.5973e-001  &  74.2 & $>10$
&-4.0902e-001    &207.2   &  4,yes\\ 
                             & 2,4 & -6.5973e-001 &168.6  & $>10$  &&& \\
                             & 3,4 & -6.5973e-001  &264.1  & $>10$ &&& \\
                             & 4,4 & -4.0928e-001$*$  &1656.0 & 1$^*$  &&& 
\\ \hline
       8 var, deg 2    & 1,1& -5.7491e-001 &0.5  &1
& -5.7491e-001 & 0.3 &1,yes\\ 
                             & 2,1&-5.7491e-001 &0.9 & 1 &&& 
\\  \hline
       8 var, deg 4    & 1,2 &-6.5946e-001 &2.8  & $>10$
&-4.3603e-001 &  1.5  &2,yes\\ 
                             & 2,2 &-4.3603e-001 &4.8   &1 &&&
\\\hline
       8 var, deg 6    & 1,3&-6.5973e-001 &  127.1  & $>10$
&  -4.1288e-001 &  161.3&3,yes  \\ 
                             & 2,3&-6.5973e-001  &126.6  & $>10$  &&& \\                                 
                             & 3,3&-4.1322e-001*  &258.7 &1$^*$  &&&
\\ \hline
      10 var, deg 2   & 1,1&-5.7491e-001  &0.4  &1
&-5.7491e-001  & 0.2  &1,yes\\
                            & 2,1&-5.7491e-001  &1.0 &1  &&& 
\\  \hline
      10 var, deg 4   & 1,2 &-6.5951e-001  &  7.8   &1
& -4.3603e-001  &5.3   &2,yes \\  
                             & 2,2 &-4.3603e-001   &20.0   &1 & & & \\
                             & 3,2 &-4.3603e-001$*$ &66.7 &1$^*$  & & & 
\\\hline
      20 var, deg 2    & 1,1& -5.7491e-001  &  1.2  &1
& -5.7491e-001   & 0.4 &1,yes\\ 
                              &2,1& -5.7491e-001  &  3.0 & 1  &&&
\\   \hline      
      20 var, deg 4    & 1,2&infeasible & 302.1    & -
& -4.3603e-001   & 5600.8 &2,yes\\ 
                              &2,2& -4.3602e-001*  &1942.2   &1$^*$ &&&
\\   \hline      
\end{tabular}
\end{scriptsize}
\end{center} 
\end{table}

\subsection{Comparison of BSOS with the LP relaxations of Krivine-Stengle 
on convex problems}

Here we compare the performance of BSOS with the LP relaxations of Krivine-Stengle on convex problems where each test problem has 5 constraint functions in addition
to the nonnegative constraint $x\geq 0$.
Note that the LP relaxation problem has exactly the same form as in \eqref{eq-1},
except that the positive semidefinite matrix variable $Q$ is set to $0$. 
We should mention that even though the Krivine-Stengle scheme 
generates LP problems instead of SDP problems, the size of the 
corresponding LP problems also increases rapidly with $d$, like for the BSOS scheme. In particular,
in both LP- and BSOS-relaxations, the dimension of the nonnegative variable $\lambda$ is 
${2m+d \choose d}$, and the constraint
matrix is fully dense. (The BSOS-relaxations include an additional semidefinite constraint with
fixed matrix size ${n+k\choose k}$.)
The following example illustrates the performance of LP relaxation method:
\begin{eqnarray*}
\begin{array}{rlrrrl}
(C_1)     \quad   
\min 
&f               &= &x_1^4    &+x_2^4 &+2x_1^2x_2^2-x_1-x_2\\[5pt]
\mbox{s.t.} 
& 0 \leq g_1&=&-x_1^4  &-2x_2^4 &+1 \\[3pt]
&0 \leq g_2 &=&-2x_1^4 &-x_2^4&+1 \\[3pt]
&0 \leq g_3 &=&-x_1^4  &-4x_2^2&+1.25 \\[3pt]
&0 \leq g_4 &=&-4x_1^4  &-x_2^4&+1.25 \\[3pt]
&0 \leq g_5 &=&-2x_1^4 &-3x_2^2&+1.1 \\[3pt]
& 0\leq x_1\\[3pt]
& 0 \leq  x_2.
\end{array}
\end{eqnarray*}
For this problem, the functions $f$ and $-g_i$'s are all convex.
The optimal value for this problem is $f(x^*)=-0.7500$, as computed by 
GloptiPoly3. For BSOS, we get 
$q^{k=2}_{d=1}=-0.7500$, and we obtained the exact result by just choosing $d=1$.
This observation is consistent with Theorem 4.1 in \cite{lasserre-new}.
For the LP relaxation method, we get 
the following values for various choices of $d$:
\begin{eqnarray*}
q^{LP}_{d=1} = \mbox{infeasible},\;\; q^{LP}_{d=2} = -1.2200, \;\;
q^{LP}_{d=3} =-1.0944,\;\;
q^{LP}_{d=4} =-0.9696,\;\;
q^{LP}_{d=5} =\mbox{fail.}
\end{eqnarray*}
Observe that when  $d$ increases, we could get a better lower bound for the exact optimal value.
However, as $d$ increases, the  LP relaxation problem would
become increasing ill-posed and the solver has difficulty in solving 
LP problem accurately. 
In particular, for $d=5$, both the solvers SeDuMi and SDPT3 fail to 
compute an accurate enough solution for the LP to generate 
a sensible lower bound for $f(x^*)$. 

In Table \ref{table2}, we observe that 
BSOS can achieve the exact result with $d=1$ for all the test instances.
In contrast, the LP relaxation method of Krivine-Stengle does not perform very well 
even though the test instances are convex problems. In particular,
observe that for the last instance {\tt C20\_2}, the LP relaxation method cannot 
produce a good lower bound even when we choose $d=3$, and the 
time taken to solve the correspond LP is about $40$ minutes.

\begin{center}
\begin{scriptsize}
\begin{longtable}{ |@{}r@{} || r |@{}r@{}|r||r|@{}r@{}|r ||@{}r@{}|r|@{}c@{}|} 
\caption{Comparison of BSOS with LP relaxations of Krivine-Stengle on convex problems.
\label{table2}}
 \\   \hline
    & \multicolumn{3}{| c || }{LP} & \multicolumn{3}{| c || }{BSOS}& \multicolumn{3}{| c | }{GloptiPoly3} \\ \cline{2-10} 
      &$d$ &Result &Time(s)   &$d,k$ &Result  &Time(s) & Result & Time(s)& Order,Optimal
\\ \hline
\endhead

       {\tt C4\_2}  &   $1$  & infeasible     &     
&$1,1$    &  -2.5000e-001     &0.4 
&  -2.5000e-001   &0.2& 1,yes\\ 
                            &   $2$  & -9.0000e-001      &   0.1    &   &  &&&&\\
                            &   $3$  & -5.8852e-001      &   0.3    &   &  &&&&\\
                            &   $4$  & -4.2500e-001      &   5.6    &   &  &&&&\\
                            &   $5$  & -3.4975e-001      &   98.4    &   &  &&&&\\
                            &   $6$  & -3.1001e-001     & 4074.1     &   &  &&&&
\\     \hline
      {\tt C4\_4}     &   $\leq3$  &infeasible      &       &$1,2$   
& -6.9574e-001  & 0.6
& -6.9574e-001 & 0.2&2,yes \\                              
                          &   $4$  & -1.1094e+000      & 16.9     &   &  &&&&\\
                          &   $5$  & -8.8542e-001      & 788.4   &   &  &&&&
\\    \hline
       {\tt C4\_6}     &   $\leq 5$  &infeasible          &       &  $1,3$
&-1.1933e+000   & 1.5
&-1.1933e+000   & 0.5 &3,unknown\\ 
                            &   $6$  &fail          &      & &  &&&&
\\   \hline

       {\tt C6\_2}     &   $1$  & infeasible     &       &$1,1$   
&-2.5000e-001    & 0.3
&-2.5000e-001    & 0.2& 1,yes\\ 
                          &   $2$  & -9.0000e-001      &    0.1   &   &  &&&&\\
                          &   $3$   & -5.8852e-001      &    0.6   &   &  &&&&\\
                          &   $4$   & -4.2500e-001      & 66.3    &   &  &&&&\\
                          &   $5$   & -3.4975e-001       &4069.3     &   &  &&&&
\\     \hline
       {\tt C6\_4}   &   $\leq 3$  & infeasible      &       &$1,2$  
& -6.9574e-001  & 1.3 
& -6.9574e-001  & 0.4&2,yes\\ 
                            &   $4$  &-1.1094e+000       & 177.5      &    & &&&&
\\     \hline
       {\tt C6\_6}   &   $\leq 5$  & infeasible         &       &$1,3$   
& -1.1933e+000  &1.3
& -1.1933e+000 &0.4  &3,unknown \\ 
                            &   $6$  & out of memory         &     & &  &&&&
\\   \hline
%

       {\tt C8\_2}   &   $1$  & infeasible     &       &$1,1$    
& -2.5000e-001   & 0.4
& -2.5000e-001  &0.2 &1,yes \\ 
                         &   $2$  & -9.0000e-001     & 0.1      &  &  & &&&\\
                         &   $3$  & -5.8852e-001     & 3.5      &  &  & &&&\\
                        &   $4$   & -4.2500e-001     & 508.8      &  &  & &&&
\\   \hline
      {\tt C8\_4}   &   $\leq3$  & infeasible     &      &$1,2$      
&-6.9574e-001  &3.3 
&-6.9574e-001   &1.2 & 2,yes\\ 
                            &   $4$  & -1.1094e+000      & 1167.3      &    &  &&&&
\\   \hline
      {\tt C10\_2}  &   $1$  &  infeasible     &  &$1,1$  
&  -2.5000e-001   & 0.8 
&  -2.5000e-001   & 0.3&1,yes\\ 
                           &   $2$  & -9.0000e-001    & 0.1      &   &  &&&&\\
  &   $3$  &  -5.8852e-001       & 15.8      &   &  &&&&\\
  &   $4$  &  -4.2500e-001      & 9993.0      &   &  &&&&
\\   \hline
      {\tt C10\_4}  &   $\leq 3$  & infeasible     &       &$1,2$      
& -6.9574e-001  &11.4
& -6.9574e-001  &5.5& 2,yes \\ 
                            &   $4$  & -1.1094e+000 &5544.2           &    & && &&
\\   \hline
      {\tt C20\_2}  &   $1$  &infeasible      &      &$1,1$    
& -2.5000e-001 &1.7 
& -2.5000e-001 &0.4& 1,yes \\ 
                            &   $2$  &-9.0000e-001         & 0.9      &    & && && \\
                            &   $3$  &-5.8852e-001        & 2398.0      &    & && &&
\\   \hline

\end{longtable}
\end{scriptsize}
\end{center}

\subsection{Performance of BSOS on quadratic problems with polyhedral constraints}

Here consider the following problem:
\begin{eqnarray}
 \begin{array}{rl}
  \min & x^T A x \\[5pt]
  \mbox{s.t.} & e^T x \leq 1, \quad x \geq 0, \quad x\in\R^n,
 \end{array}
\label{eq-QP}
\end{eqnarray}
where $A$ is a given $n\times n$ symmetric matrix. 
In our numerical experiments, we generate 
random instances such as {\tt Qn10\_r2} for which $n=10$ and 
$A$ is randomly generated
so that it has $r=2$ negative eigenvalues and $n-r$ positive eigenvalues as
follows: 
\begin{verbatim}
    rng('default')
    A1 = randn(n); A2 = A1*A1'; perm=randperm(n);
    [V,D] = eig(A); eigval=diag(D); idx1=perm(1:r); idx2=perm(r+1:n); 
    V1=V(:,idx1); V2=V(:,idx2); d1=eigval(idx1); d2=eigval(idx2);
    A = V2*diag(d2)*V2' - V1*diag(d1)*V1';
\end{verbatim}

Table \ref{table3} compares the performance of BSOS and 
GloptiPoly3. From the numerical results, we can see that BSOS 
is far more efficient than GloptiPoly3 in solving the problems \eqref{eq-QP}.
For example, for the problem {\tt Qn20\_r2} with $n=20$, BSOS 
took only 1.9 seconds to generate the lower bound
$-2.0356e3$ for the problem, but GloptiPoly3 took more
than 1 hour to generate the same bound. 
The disparity in the efficiency between BSOS and GloptiPoly3
is expected to become even wider
for other instances with $n$  larger than $20$. 

In Table  \ref{table3}, we again use the sufficient condition 
stated in Lemma \ref{lemma-dual} to check whether the generated
lower bound is indeed optimal. For each of the first eight instances, 
 the moment matrix $\M_\ell(\y^*)$ associated with 
the optimal solution $\theta^*$ of \eqref{aux-dual} has numerical rank equal to one (we declare that the matrix has numerical rank equal to one if the largest
eigenvalue is at least $10^4$ times larger than the second largest eigenvalue), 
which certifies that the lower bound is actually the optimal value.

\begin{table}[h] 
\begin{center}
\begin{scriptsize}
\caption{Comparison of BSOS and GloptiPoly3 on quadratic problems
with polyhedral constraints.  \label{table3}}
\begin{tabular}{ |@{}c@{}|| @{}c@{}|@{}c@{}|@{}c@{}|@{}c@{}||@{}c@{}|@{}c@{}|@{}c@{} |@{}c@{}|}
    \hline
     Problem
&\multicolumn{4}{|c||}{BSOS} & \multicolumn{4}{|c|}{GloptiPoly3} \\  \cline{2-9} 
    & $(d,k)$   &Result & Time(s)  & rank($\M_\ell(\y^*)$)
& Order &Result & Time(s) & Optimal\\ \hline

       {\tt Qn10\_r2}   &1,1 & infeasible     & 0.8    &    & 1 & infeasible & 0.1 & \\  
         $n=10,r=2$     &2,1& -2.8023e+000  & 0.5 & 1 & 2  &-2.8023e+000 & 2.8 & yes
\\     \hline
       {\tt Qn10\_r5}   &1,1 & infeasible     & 0.7      &   & 1 & infeasible & 0.1 & \\  
      $n=10,r=5$        &2,1 &-1.9685e+001   & 0.4 & 1 & 2  &-1.9685e+001 & 2.3 & yes
\\     \hline
       {\tt Qn20\_r2}   &1,1 & infeasible     &1.5      &    & 1 & infeasible &0.1  & \\  
        $n=20,r=2$       &2,1 &-2.0356e-003    &1.9  & 1 & 2  & -2.0356e-003 &4057.0  & yes 
\\     \hline
       {\tt Qn20\_r5}   &1,1 & infeasible      & 1.7    &  & 1 & infeasible &0.1  & \\  
    $n=20,r=5$          &2,1 & -1.7900e+001   &1.0 &1  & 2  &-1.7900e+001  &3587.4  & yes 
\\     \hline
       {\tt Qn40\_r4}   &1,1 & infeasible      &10.9     &  & 1 & infeasible &1.2  & \\  
    $n=40,r=4$           &2,1 & -7.0062e+000   &10.9 &1  &   &&  & 
\\     \hline
       {\tt Qn50\_r5}   &1,1 & infeasible      & 24.9   &    & 1 & infeasible &1.5  & \\  
   $n=50,r=5$            &2,1 &-5.9870e+000   &34.8 &1    &   &&  & 
\\     \hline
       {\tt Qn100\_r10}   &1,1 & infeasible    &385.8  &  & 1 & infeasible & 108.7  & \\  
$n=100,r=10$              &2,1 &-8.8502e+000     & 1617.4 & 1   &   &&  & 
\\     \hline
       {\tt Problem 2.9} in \cite{QPcollection}  
   &1,1 & infeasible    &0.9                               &           & 1 & infeasible & 0.1  & \\  
    $n=10,r=6$         &2,1 & 3.7500e-001     &0.5  & 1   & 2 &3.7500e-001  &2.7  &yes 
\\     \hline
    {\tt A=-toeplitz([0,1,1,1,1,zeros(1,10)])}    
                               &1,1 & infeasible    &1.2        &    & 1 & infeasible & 0.1  & \\  
    $n=15,r=3$         &2,1 & -8.0000e-001     &0.6 &11  & 2 & -8.0087e-001   &233.7  &unknown
\\     \hline
    {\tt A=-toeplitz([0,1,1,zeros(1,17)])}    
   &1,1 & infeasible    &1.9                                      & & 1 & infeasible & 0.2  & \\  
    $n=20,r=7$         &2,1 & -6.6667e-001     &1.3   & 18  & 2 &  -6.6905e-001 &4753.3  &unknown
\\     \hline

\end{tabular}
\end{scriptsize}
\end{center}
\end{table}
\subsection{Performance of BSOS on higher order problems with more variables}
Here we consider the following problem:
\begin{eqnarray}
 \begin{array}{rl}
\min &\sum_{|\alpha| \leq \ell}\:c_{\alpha }\,x^{\alpha}\\[5pt]
\mbox{s.t.}& x_i \geq 0,\quad i=1,...n\\[5pt]
&\sum_{i=1}^n x_i^2 \leq 1
 \end{array}
\label{eq-HP}
\end{eqnarray}
where $\ell=2$ or $\ell=4$ and $c_{\alpha}$ are randomly generated in $[-1,1]$. In our numerical experiments, we generated instances such as {\tt Hn20\_4} for which $n=20$ and the degree of the polynomial is $4$. The problem is find the minimal value of a polynomial  on the Euclidean unit ball intersected with the positive orthant.

\begin{table}[h] 
\begin{center}
\begin{scriptsize}
\caption{Comparison of BSOS and GloptiPoly3 on higher order problems with more variables. \label{table4}}
\begin{tabular}{ |@{}c@{}|| @{}c@{}|@{}c@{}|@{}c@{}|@{}c@{}||@{}c@{}|@{}c@{}|@{}c@{} |@{}c@{}|}
    \hline
     Problem
&\multicolumn{4}{|c||}{BSOS} & \multicolumn{4}{|c|}{GloptiPoly3} \\  \cline{2-9} 
    & $(d,k)$   &Result & Time(s)  & rank($\M_\ell(\y^*)$)
& Order &Result & Time(s) & Optimal\\ \hline

       {\tt Hn20\_2}   &1,1 & -2.7002e+000     & 1.1    &  2  & 1 &  -2.7002e+000 & 2.3 & unknown \\  
         $n=20,\ell=2$     &2,1& -2.3638e+000  & 1.5 & 1 & 2  & out of memory &  & 
\\     \hline
       {\tt Hn20\_4}   &1,2 & -2.1860e+000    & 254     & $>10 $  & 1 & out of memory &  & \\  
       $n=20,\ell=4$     &2,2 &-1.5943e+000   & 294 & 1 &  &  &  & 
\\     \hline
       {\tt Hn30\_2}   &1,1 & -2.1320e+000     &2.0      & 2   & 1 & -2.1320e+000 &2.5  &  unknown \\  
     $n=30,\ell=2$         &2,1 &  -1.8917e+000   &12  & 1 & 2  & out of memory  &  &  
\\     \hline
       {\tt Hn30\_4}   &1,2 & -3.1126e+000      & 418   &$ >10$ & 1 & out of memory &   & \\  
    $n=30,\ell=4$          &2,2 & -1.0781e+000   &512 &1  &   &   & &  
\\     \hline
       {\tt Hn40\_2}   &1,1 & -2.3789e+000      &4.6    & 2 & 1 & -2.3789e+000 &4.0  &  unknown \\  
    $n=40,\ell=2$           &2,1 & -2.1138e+000   &29 &1  &2   & out of memory &  & 
\\     \hline
       {\tt Hn40\_4}   &1,2 & -3.2917e+000     & 812   & $>10$   & 1 & out of memory &  & \\  
   $n=40,\ell=4$            &2,2 & -1.6531e+000 & 1102 &1    &  &  &  & 
\\     \hline

\end{tabular}
\end{scriptsize}
\end{center}
\end{table}

Table \ref{table4} displays  the respective performance of BSOS and GloptiPoly3. From the numerical results, we can see that BSOS is far more efficient than GloptiPoly3 in solving  problems \eqref{eq-HP}. For example, for  problem {\tt Hn20\_4} (with $n=20$ variables and degree $\ell=4$), BSOS took about 250s to generate the first lower bound $-2.1860$, and took nearly 300s to generate a better lower bound of $-1.5943$, which is also the exact optimal value for the problem. But GloptiPoly3 got out of memory when solving the same problem. Similarly for problem {\tt Hn40\_2}, it took BSOS and GloptiPoly3 very little time to generate the first lower bound of $-2.3789$. To get a better lower bound, it took BSOS 29s to generate the optimal value of the problem. In contrast GloptiPoly3 got out of memory for improving the bound. From our observations, the disparity in efficiency between BSOS and GloptiPoly will become wider for instances with larger $n$ and/or of higher degree.

\section{Conclusion}

We have described and tested a new hierarchy of semideifinite relaxations for global polynomial optimization.
It tries to combine some advantages of previously defined LP- and SOS-hierarchies. Essentially,
it uses a positivity certificate already used in the LP-hierarchy but with an additional semidefinite constraint 
which thus makes it an SOS-hierarchy. However the main and crucial point is that
the size of this additional semidefinite constraint is fixed in advance and decided by the user (in contrast to the 
standard SOS-hierarchy in which the size of the semidefinite constraint increases in the hierarchy). Preliminary results 
are encouraging especially for non convex problems on convex polytopes where problems with up to 
$100$ variables have been solved in a reasonable amount of time (whereas the standard SOS-hierarchy of GloptiPoly cannot be implemented). 

For problems of larger size one needs to consider some serious numerical issues 
due to the presence of some fully dense submatrix and some nearly dependent linear constraints. In addition,
to be able to handle  large-scale problems one also needs to provide a ``sparse version" of this hierarchy,
an analogue of the sparse version of the SOS-hierarchy defined in \cite{waki}. 
Both issues (a topic of further investigation) are certainly non trivial, in particular the latter issue
because the positivity certificate used in this new hierarchy involves products of initial 
polynomial constraints, which destroys the sparsity pattern considered in \cite{waki}.

\section*{Appendix}
\label{appendix}
Before proving Lemma \ref{lemma-dual} we need introduce some notation.
For $\tau\geq2$ and a sequence $\y=(y_\alpha)\in \N^n_\tau$, let $L_\y:\R[x]_\tau\to\R$ be the Riesz functional:
\[f\:\left(:=\sum_{\alpha\in\N^n_\tau}f_\alpha\,x^\alpha\,\right)\quad\mapsto \quad L_\y(f):=
\sum_{\alpha\in\N^n_\tau}f_\alpha\,y_\alpha,\qquad f\in\R[x]_\tau,\]
and let $\M_k(\y)$ be the moment matrix of order $k$, associated with $\y$. 
If $q\in\R[x]_k$ with coefficient vector $\q=(q_\alpha)$, then $\langle \q,\M_k(\y)\,\q\rangle=L_\y(q^2)$
and if $\y$ is the (truncated) moment sequence of a measure $\mu$,
\[\langle \q,\M_k(\y)\,\q\rangle\,=\,L_\y(q^2)\,=\,\int q(x)^2\,d\mu(x).\]

\subsection*{Proof of Lemma \ref{lemma-dual}}

(a) We first prove that the dual of (\ref{first}) which is the semidefinite program:
\begin{equation}
\label{two}
\rho_d^k := \displaystyle\inf_{\y\in\R^L}\,\{\,L_\y(f)\,:\:
\displaystyle\M_k(\y)\,\succeq\,0;\: L_\y(1)\,=\,1;\quad 
L_\y(h_{\alpha\beta})\, \geq 0,\quad (\alpha,\beta)\in\N^{2m}_d\,\}
\end{equation}
satisfies Slater's condition. Recall that $K$ has nonempty interior;
so let $\y$ be the sequence of
moments of the Lebesgue measure $\mu$ on $\K$, scaled to be a probability measure,
so that $L_\y(1)=1$. Necessarily
$\M_k(\y)\succ0$. Otherwise there would exists $0\neq q\in\R[x]_k$ such that 
\[\langle \q,\M_k(\y)\,\q\rangle\,=\, \int_K q(x)^2\,d\mu(x)\,=\,0.\]
But then $q$ vanishes almost everywhere on $K$, which implies $q=0$, a contradiction.

Next, observe that for each $(\alpha,\beta)\in\N^{2m}_d$, the polynomial $h_{\alpha\beta}\in\R[x]_\tau$ is nonnegative on $K$ and since there exists $x_0\in K$ such that $0<g_j(x_0)<1$ for all $j=1,\ldots,m$, there is an open set $O\subset K$ such that
$h_{\alpha\beta}(x)>0$ on $O$ for all $(\alpha,\beta)\in\N^{2m}$. Therefore
\[L_\y(h_{\alpha\beta})\,=\,\int_Kh_{\alpha\beta}\,d\mu\,\geq\,\int_Oh_{\alpha\beta}\,d\mu\,>0,\quad\forall\, (\alpha,\beta)\in\N^{2m}.\]
Therefore $\y$ is a strictly feasible solution of (\ref{two}), that is,
Slater's condition holds true for (\ref{two}). Hence $\rho^k_d=q^k_d$ for all $d$. It remains to prove that
$q^k_d>-\infty$. But this follows from Theorem \ref{th-prelim}(b) as soon as $d$ is sufficiently large, say $d\geq d_0$ for some integer $d_0$. Indeed then $-\infty<\theta_d\leq q^k_d\leq f^*$ for all $d\geq d_0$.
Finally for each fixed $d$, with probability one (\ref{first}) and (\ref{eq-1}) have same optimal value $q^k_d$, and so,
an optimal solution $(q^k_d,\lambda^*,Q^*)$ of (\ref{first}) is also an optimal solution of (\ref{eq-1}).\\

(b) Let $\theta^*$ be an optimal solution of (\ref{aux-dual}) and let $\y^*$ be as in (\ref{y-star}).\\

$\bullet$ If ${\rm rank}\,\M_s(\y^*)=1$ then $\M_s(\y^*)=v_s(x^*)\,v_s(x^*)^T$ for some $x^*\in\R^n$, and
so by definition of the moment matrix $\M_s(\y^*)$, $\y^*=(y^*_\alpha)$, $\alpha\in\N^n_{2s}$, is the vector of moments (up to order $2s$) of the Dirac measure $\delta_{x^*}$ at the point $x^*$.  That is, $y^*_\alpha=(x^*)^\alpha$ for every $\alpha\in\N^n_{2s}$.
But from (\ref{y-star}),
\[(x^*)^\alpha\,=\,y^*_\alpha\,=\,\sum_{p=1}^L\theta^*_p\,(x^{(p)})^\alpha,\quad\forall\alpha\in\N^n_{2s}.\]
In particular, for moments of order $1$ we obtain $x^*=\sum_{p=1}^L\theta^*_p\,x^{(p)}$.
In other words, up to moments of order $2s$, one cannot distinguish the Dirac measure $\delta_{x^*}$ at $x^*$ from the signed measure
$\mu=\sum_p\theta^*_p\delta_{x^{(p)}}$ (recall that the $\theta^*_p$'s are not necessarily nonnegative).
That is, $(x^*)^\alpha=\int x^\alpha d\delta_{x^*}\,=\,\int x^\alpha d\mu$ for all $\alpha\in\N^n_{2s}$. This in turn implies
that for every $q\in\R[x]_{2s}$:
\[q(x^*)\,=\,\langle q,\delta_{x^*}\rangle\,=\,\langle q,\mu\rangle\,=\,
\langle q,\sum_{p=1}^L\theta^*_p\,\delta_{x^{(p)}}\rangle\,=\,
\sum_{p=1}^L\theta^*_p\,q(x^{(p)}).\]
Next, as $\theta^*$ is feasible for (\ref{aux-dual}) and $2s\geq \max[{\rm deg}(f);\,{\rm deg}(g_j)]$,
\[0\leq\displaystyle\sum_{p=1}^L \theta^*_p\,\langle h_{\alpha\beta},\delta_{x^{(p)}}\rangle 
=\left\langle h_{\alpha\beta},\displaystyle\sum_{p=1}^L \theta^*_p\,\delta_{x^{(p)}}\right\rangle\,=\,h_{\alpha\beta}(x^*), \quad\forall (\alpha,\beta):\:{\rm deg}(h_{\alpha\beta})\,\leq 2s.\]
In particular, choosing $(\alpha,\beta)\in\N^{2m}_{2s}$ such that $h_{\alpha\beta}=g_j$ (i.e. $\beta=0$, $\alpha_i=\delta_{i=j}$),
one obtains
$g_j(x^*)\geq0$, $j=1,\ldots,m$,
 which shows that $x^*\in K$. In addition, 
\[f^*\,\geq\,\tilde{q}^k_d\,=\, \displaystyle\sum_{p=1}^L \theta^*_p\,\langle f,\delta_{x^{(p)}}\rangle\,=\,
\left\langle f,\displaystyle\sum_{p=1}^L \theta^*_p\,\delta_{x^{(p)}}\right\rangle\,=\,f(x^*),\]
  which proves that $x^*\in K$  is an optimal solution of problem $(P)$.
  
$\bullet$  If $\M_s(\y^*)\succeq0$, $\M_{s-r}(g_j\,\y^*)\succeq0$, $j=1,\ldots,m$, and
  ${\rm rank}\,\M_s(\y^*)={\rm rank}\,\M_{s-r}(\y^*)$ then by 
  Theorem \cite[Theorem 3.11, p. 66]{lasserre-imperial}, $\y^*$ is the vector of moments up to order $2s$, of some 
  atomic-probability measure $\mu$ supported on $v:={\rm rank}\,\M_s(\y^*)$ points 
  $z(i)\in K$, $i=1,\ldots,v$. That is, there exist positive weights $(w_i)\subset\R_+$ such that
  \[\mu\,=\,\sum_{i=1}^v w_i\,\delta_{z(i)};\quad \sum_{i=1}^vw_i=1;\quad w_i\,>\,0,\:i=1,\ldots,v.\]
  Therefore,
  \[f^*\,\geq\,\tilde{q}^k_d\,=\,\sum_{p=1}^L\theta^*_p\,\langle f,\delta_{x(p)}\rangle
  \,=\,\sum_{\alpha\in\N^n} f_\alpha \,y^*_\alpha\,=\,\int_K f\,d\mu\,\geq\,f^*,\]
which shows that $\tilde{q}^k_d=f^*$. In addition
 \[0\,=\,f^*- \int_K f\,d\mu\,=\,\int_K (f^*-f)\,d\mu\,=\,
 \sum_{i=1}^v \underbrace{w_i}_{>0}\,(\underbrace{f^*-f(z(i))}_{\leq 0},\]
 which implies $f(z(i))=f^*$ for every $i=1,\ldots,v$. 
 Finally, the $v$ global minimizers can be extracted from the moment matrix $\M_s(\y^*)$ by the simple linear algebra procedure described in 
 Henrion and Lasserre \cite{extraction}.   $\quad\Box$

\newpage
\section*{Appendix} 
\subsection*{Test functions for BSOS and GloptiPoly in Table \ref{table1}}
\begin{scriptsize}
Example {\tt P4\_2} (4 variables, degree 2):
\begin{eqnarray*}
\begin{array}{ll}
     f=x_{1}^2-x_{2}^2+x_{3}^2-x_{4}^2+x_1-x_2; \quad 
&g_1=2x_{1}^2+3x_{2}^2+2x_{1}x_{2}+2x_{3}^2+3x_{4}^2+2x_{3}x_{4};
\\[3pt]
g_2=3x_{1}^2+2x_{2}^2-4x_{1}x_{2}+3x_{3}^2+2x_{4}^2-4x_{3}x_{4};\quad
&g_3=x_{1}^2+6x_{2}^2-4x_{1}x_{2}+x_{3}^2+6x_{4}^2-4x_{3}x_{4};
\\[3pt]
g_4=x_{1}^2+4x_{2}^2-3x_{1}x_{2}+x_{3}^2+4x_{4}^2-3x_{3}x_{4};\quad
&g_5=2x_{1}^2+5x_{2}^2+3x_{1}x_{2}+2x_{3}^2+5x_{4}^2+3x_{3}x_{4}; \quad
x\geq 0.
\end{array}
\end{eqnarray*}
Example {\tt P4\_4} (4 variables, degree 4):
\begin{eqnarray*}
\begin{array}{ll}
f    =x_{1}^4-x_{2}^4+x_{3}^4-x_{4}^4; \quad
&g_1=2x_{1}^4+3x_{2}^2+2x_{1}x_{2}+2x_{3}^4+3x_{4}^2+2x_{3}x_{4};
\\[3pt]
g_2=3x_{1}^2+2x_{2}^2-4x_{1}x_{2}+3x_{3}^2+2x_{4}^2-4x_{3}x_{4};\quad
&g_3=x_{1}^2+6x_{2}^2-4x_{1}x_{2}+x_{3}^2+6x_{4}^2-4x_{3}x_{4};
\\[3pt]
g_4=x_{1}^2+4x_{2}^4-3x_{1}x_{2}+x_{3}^2+4x_{4}^4-3x_{3}x_{4};\quad
&g_5=2x_{1}^2+5x_{2}^2+3x_{1}x_{2}+2x_{3}^2+5x_{4}^2+3x_{3}x_{4}; \quad
x\geq 0.
\end{array}
\end{eqnarray*}
Example {\tt P4\_6} (4 variables, degree 6):
\begin{eqnarray*}
\begin{array}{ll}
    f=x_{1}^4x_{2}^2+x_{1}^2x_{2}^4-x_{1}^2x_{2}^2+x_{3}^4x_{4}^2+x_{3}^2x_{4}^4-x_{3}^2x_{4}^2; \quad
&g_1=x_{1}^2+x_{2}^2+x_{3}^2+x_{4}^2; 
\\[3pt]
g_2=3x_{1}^2+2x_{2}^2-4x_{1}x_{2}+3x_{3}^2+2x_{4}^2-4x_{3}x_{4};\quad
&g_3=x_{1}^2+6x_{2}^4-8x_{1}x_{2}+x_{3}^2+6x_{4}^4-8x_{3}x_{4}+2.5;
\\[3pt]
g_4=x_{1}^4+3x_{2}^4+x_{3}^4+3x_{4}^4;\quad
&g_5=x_{1}^2+x_{2}^3+x_{3}^2+x_{4}^3; \quad x\geq 0.
\end{array}
\end{eqnarray*}
Example {\tt P4\_8} (4 variables, degree 8):
\begin{eqnarray*}
\begin{array}{ll}
f=x_{1}^4x_{2}^2+x_{1}^2x_{2}^6-x_{1}^2x_{2}^2
+x_{3}^4x_{4}^2+x_{3}^2x_{4}^6-x_{3}^2x_{4}^2;\quad
&g_1=x_{1}^2+x_{2}^2+x_{3}^2+x_{4}^2; 
\\[3pt]
g_2=3x_{1}^2+2x_{2}^2-4x_{1}x_{2}+3x_{3}^2+2x_{4}^2-4x_{3}x_{4};\quad
&g_3=x_{1}^2+6x_{2}^4-8x_{1}x_{2}+x_{3}^2+6x_{4}^4-8x_{3}x_{4}+2.5;
\\[3pt]
g_4=x_{1}^4+3x_{2}^4+x_{3}^4+3x_{4}^4;\quad
&g_5=x_{1}^2+x_{2}^3+x_{3}^2+x_{4}^3; \quad x\geq 0.
\end{array}
\end{eqnarray*}
Example {\tt P6\_2} (6 variables, degree 2):
\begin{eqnarray*}
\begin{array}{ll}
f=x_{1}^2-x_{2}^2+x_{3}^2-x_{4}^2+x_{5}^2-x_{6}^2 + x_1-x_2;
\\[3pt]
g_1=2x_{1}^2+3x_{2}^2+2x_{1}x_{2}+2x_{3}^2+3x_{4}^2+2x_{3}x_{4}
+2x_{5}^2+3x_{6}^2+2x_{5}x_{6};
\\[3pt]
g_2=3x_{1}^2+2x_{2}^2-4x_{1}x_{2}+3x_{3}^2+2x_{4}^2-4x_{3}x_{4}
+3x_{5}^2+2x_{6}^2-4x_{5}x_{6};
\\[3pt]
g_3=x_{1}^2+6x_{2}^2-4x_{1}x_{2}+x_{3}^2+6x_{4}^2-4x_{3}x_{4}
+x_{5}^2+6x_{6}^2-4x_{5}x_{6};
\\[3pt]
g_4=x_{1}^2+4x_{2}^2-3x_{1}x_{2}+x_{3}^2+4x_{4}^2-3x_{3}x_{4}+x_{5}^2
+4x_{6}^2-3x_{5}x_{6};
\\[3pt]
g_5=2x_{1}^2+5x_{2}^2+3x_{1}x_{2}+2x_{3}^2+5x_{4}^2+3x_{3}x_{4}
+2x_{5}^2+5x_{6}^2+3x_{5}x_{6}; \quad x\geq 0.
\end{array}
\end{eqnarray*}
Example {\tt P6\_4} (6 variables, degree 4):
\begin{eqnarray*}
\begin{array}{ll}
f    =x_{1}^4-x_{2}^2+x_{3}^4-x_{4}^2+x_{5}^4-x_{6}^2 + x_1 - x_2;
\\[3pt]
g_1=2x_{1}^4+x_{2}^2+2x_{1}x_{2}+2x_{3}^4+x_{4}^2+2x_{3}x_{4}+2x_{5}^4+x_{6}^2+2x_{5}x_{6};\\[3pt]
g_2=3x_{1}^2+x_{2}^2-4x_{1}x_{2}+3x_{3}^2+x_{4}^2-4x_{3}x_{4}+3x_{5}^2+x_{6}^2-4x_{5}x_{6};\\[3pt]
g_3=x_{1}^2+6x_{2}^2-4x_{1}x_{2}+x_{3}^2+6x_{4}^2-4x_{3}x_{4}+x_{5}^2+6x_{6}^2-4x_{5}x_{6};\\[3pt]
g_4=x_{1}^2+3x_{2}^4-3x_{1}x_{2}+x_{3}^2+3x_{4}^4-3x_{3}x_{4}+x_{5}^2+3x_{6}^4-3x_{5}x_{6};\\[3pt]
g_5=2x_{1}^2+5x_{2}^2+3x_{1}x_{2}+2x_{3}^2+5x_{4}^2+3x_{3}x_{4}
+2x_{5}^2+5x_{6}^2+3x_{5}x_{6}, \quad x\geq 0.
\end{array}
\end{eqnarray*}
Example {\tt P6\_6} (6 variables, degree 6):
\begin{eqnarray*}
\begin{array}{ll}
f=x_{1}^6-x_{2}^6+x_{3}^6-x_{4}^6+x_{5}^6-x_{6}^6 +x_1-x_2;
\\[3pt]
g_1=2x_{1}^6+3x_{2}^2+2x_{1}x_{2}+2x_{3}^6+3x_{4}^2+2x_{3}x_{4}+2x_{5}^6+3x_{6}^2+2x_{5}x_{6};\\[3pt]
g_2=3x_{1}^2+2x_{2}^2-4x_{1}x_{2}+3x_{3}^2+2x_{4}^2-4x_{3}x_{4}+3x_{5}^2+2x_{6}^2-4x_{5}x_{6};\\[3pt]
g_3=x_{1}^2+6x_{2}^2-4x_{1}x_{2}+x_{3}^2+6x_{4}^2-4x_{3}x_{4}+x_{5}^2+6x_{6}^2-4x_{5}x_{6};\\[3pt]
g_4=x_{1}^2+4x_{2}^6-3x_{1}x_{2}+x_{3}^2+4x_{4}^6-3x_{3}x_{4}+x_{5}^2+4x_{6}^6-3x_{5}x_{6};\\[3pt]
g_5=2x_{1}^2+5x_{2}^2+3x_{1}x_{2}+2x_{3}^2+5x_{4}^2+3x_{3}x_{4}
+2x_{5}^2+5x_{6}^2+3x_{5}x_{6}, \quad x\geq 0.
\end{array}
\end{eqnarray*}
Example {\tt P6\_8} (6 variables, degree 8):
\begin{eqnarray*}
\begin{array}{ll}
f=x_{1}^8-x_{2}^8+x_{3}^8-x_{4}^8+x_{5}^8-x_{6}^8 + x_1-x_2;
\\[3pt]
g_1=2x_{1}^8+3x_{2}^2+2x_{1}x_{2}+2x_{3}^8+3x_{4}^2+2x_{3}x_{4}+2x_{5}^8+3x_{6}^2+2x_{5}x_{6};\\[3pt]
g_2=3x_{1}^2+2x_{2}^2-4x_{1}x_{2}+3x_{3}^2+2x_{4}^2-4x_{3}x_{4}+3x_{5}^2+2x_{6}^2-4x_{5}x_{6};\\[3pt]
g_3=x_{1}^2+6x_{2}^2-4x_{1}x_{2}+x_{3}^2+6x_{4}^2-4x_{3}x_{4}+x_{5}^2+6x_{6}^2-4x_{5}x_{6};\\[3pt]
g_4=x_{1}^2+4x_{2}^8-3x_{1}x_{2}+x_{3}^2+4x_{4}^8-3x_{3}x_{4}+x_{5}^2+4x_{6}^8-3x_{5}x_{6};\\[3pt]
g_5=2x_{1}^2+5x_{2}^2+3x_{1}x_{2}+2x_{3}^2+5x_{4}^2+3x_{3}x_{4}
+2x_{5}^2+5x_{6}^2+3x_{5}x_{6}, \quad x\geq 0.
\end{array}
\end{eqnarray*}
Example {\tt P8\_2} (8 variables, degree 2):
\begin{eqnarray*}
\begin{array}{ll}
f=x_{1}^2-x_{2}^2+x_{3}^2-x_{4}^2+x_{5}^2-x_{6}^2+x_{7}^2-x_{8}^2 + x_1-x_2;
\\[3pt]
g_1=2x_{1}^2+3x_{2}^2+2x_{1}x_{2}+2x_{3}^2+3x_{4}^2+2x_{3}x_{4}+2x_{5}^2+3x_{6}^2+2x_{5}x_{6}+2x_{7}^2+3x_{8}^2+2x_{7}x_{8};\\[3pt]
g_2=3x_{1}^2+2x_{2}^2-4x_{1}x_{2}+3x_{3}^2+2x_{4}^2-4x_{3}x_{4}+3x_{5}^2+2x_{6}^2-4x_{5}x_{6}+3x_{7}^2+2x_{8}^2-4x_{7}x_{8};\\[3pt]
g_3=x_{1}^2+6x_{2}^2-4x_{1}x_{2}+x_{3}^2+6x_{4}^2-4x_{3}x_{4}+x_{5}^2+6x_{6}^2-4x_{5}x_{6}+x_{7}^2+6x_{8}^2-4x_{7}x_{8};\\[3pt]
g_4=x_{1}^2+4x_{2}^2-3x_{1}x_{2}+x_{3}^2+4x_{4}^2-3x_{3}x_{4}+x_{5}^2+4x_{6}^2-3x_{5}x_{6}+x_{7}^2+4x_{8}^2-3x_{7}x_{8};\\[3pt]
g_5=2x_{1}^2+5x_{2}^2+3x_{1}x_{2}+2x_{3}^2+5x_{4}^2+3x_{3}x_{4}
+2x_{5}^2+5x_{6}^2+3x_{5}x_{6}+2x_{7}^2+5x_{8}^2+3x_{7}x_{8}; 
\quad x\geq 0.
\end{array}
\end{eqnarray*}
Example {\tt P8\_4} (8 variables, degree 4):
\begin{eqnarray*}
\begin{array}{ll}
f=x_{1}^4-x_{2}^4+x_{3}^4-x_{4}^4+x_{5}^4-x_{6}^4+x_{7}^4-x_{8}^4 + x_1-x_2;
\\[3pt]
g_1=2x_{1}^4+3x_{2}^2+2x_{1}x_{2}+2x_{3}^4+3x_{4}^2+2x_{3}x_{4}+2x_{5}^4+3x_{6}^2+2x_{5}x_{6}+2x_{7}^4+3x_{8}^2+2x_{7}x_{8};\\[3pt]
g_2=3x_{1}^2+2x_{2}^2-4x_{1}x_{2}+3x_{3}^2+2x_{4}^2-4x_{3}x_{4}+3x_{5}^2+2x_{6}^2-4x_{5}x_{6}+3x_{7}^2+2x_{8}^2-4x_{7}x_{8};\\[3pt]
g_3=x_{1}^2+6x_{2}^2-4x_{1}x_{2}+x_{3}^2+6x_{4}^2-4x_{3}x_{4}+x_{5}^2+6x_{6}^2-4x_{5}x_{6}+x_{7}^2+6x_{8}^2-4x_{7}x_{8};\\[3pt]
g_4=x_{1}^2+4x_{2}^4-3x_{1}x_{2}+x_{3}^2+4x_{4}^4-3x_{3}x_{4}+x_{5}^2+4x_{6}^4-3x_{5}x_{6}+x_{7}^2+4x_{8}^4-3x_{7}x_{8};\\[3pt]
g_5=2x_{1}^2+5x_{2}^2+3x_{1}x_{2}+2x_{3}^2+5x_{4}^2+3x_{3}x_{4}+2x_{5}^2
+5x_{6}^2+3x_{5}x_{6}+2x_{7}^2+5x_{8}^2+3x_{7}x_{8}, \quad x\geq 0.
\end{array}
\end{eqnarray*}
Example {\tt P8\_6} (8 variables, degree 6):
\begin{eqnarray*}
\begin{array}{ll}
f=x_{1}^6-x_{2}^6+x_{3}^6-x_{4}^6+x_{5}^6-x_{6}^6+x_{7}^6-x_{8}^6 + x_1-x_2;
\\[3pt]
g_1=2x_{1}^6+3x_{2}^2+2x_{1}x_{2}+2x_{3}^6+3x_{4}^2+2x_{3}x_{4}+2x_{5}^6+3x_{6}^2+2x_{5}x_{6}+2x_{7}^6+3x_{8}^2+2x_{7}x_{8};\\[3pt]
g_2=3x_{1}^2+2x_{2}^2-4x_{1}x_{2}+3x_{3}^2+2x_{4}^2-4x_{3}x_{4}+3x_{5}^2+2x_{6}^2-4x_{5}x_{6}+3x_{7}^2+2x_{8}^2-4x_{7}x_{8};\\[3pt]
g_3=x_{1}^2+6x_{2}^2-4x_{1}x_{2}+x_{3}^2+6x_{4}^2-4x_{3}x_{4}+x_{5}^2+6x_{6}^2-4x_{5}x_{6}+x_{7}^2+6x_{8}^2-4x_{7}x_{8};\\[3pt]
g_4=x_{1}^2+4x_{2}^6-3x_{1}x_{2}+x_{3}^2+4x_{4}^6-3x_{3}x_{4}+x_{5}^2+4x_{6}^6-3x_{5}x_{6}+x_{7}^2+4x_{8}^6-3x_{7}x_{8};\\[3pt]
g_5=2x_{1}^2+5x_{2}^2+3x_{1}x_{2}+2x_{3}^2+5x_{4}^2+3x_{3}x_{4}
+2x_{5}^2+5x_{6}^2+3x_{5}x_{6}+2x_{7}^2+5x_{8}^2+3x_{7}x_{8}, \quad x\geq 0.
\end{array}
\end{eqnarray*}
Example {\tt P10\_2} (10 variables, degree 2):
\begin{eqnarray*}
\begin{array}{ll}
f=x_{1}^2-x_{2}^2+x_{3}^2-x_{4}^2+x_{5}^2-x_{6}^2+x_{7}^2-x_{8}^2+x_{9}^2-x_{10}^2+x_1-x_2;
\\[3pt]
g_1=2x_{1}^2+3x_{2}^2+2x_{1}x_{2}+2x_{3}^2+3x_{4}^2+2x_{3}x_{4}+2x_{5}^2+3x_{6}^2+2x_{5}x_{6}
+2x_{7}^2+3x_{8}^2+2x_{7}x_{8}+2x_{9}^2+3x_{10}^2+2x_{9}x_{10};
\\[3pt]
g_2=3x_{1}^2+2x_{2}^2-4x_{1}x_{2}+3x_{3}^2+2x_{4}^2-4x_{3}x_{4}+3x_{5}^2+2x_{6}^2-4x_{5}x_{6}
+3x_{7}^2+2x_{8}^2-4x_{7}x_{8}+3x_{9}^2+2x_{10}^2-4x_{9}x_{10};
\\[3pt]
g_3=x_{1}^2+6x_{2}^2-4x_{1}x_{2}+x_{3}^2+6x_{4}^2-4x_{3}x_{4}+x_{5}^2+6x_{6}^2-4x_{5}x_{6}
+x_{7}^2+6x_{8}^2-4x_{7}x_{8}+x_{9}^2+6x_{10}^2-4x_{9}x_{10};
\\[3pt]
g_4=x_{1}^2+4x_{2}^2-3x_{1}x_{2}+x_{3}^2+4x_{4}^2-3x_{3}x_{4}+x_{5}^2+4x_{6}^2-3x_{5}x_{6}
+x_{7}^2+4x_{8}^2-3x_{7}x_{8}+x_{9}^2+4x_{10}^2-3x_{9}x_{10};
\\[3pt]
g_5=2x_{1}^2+5x_{2}^2+3x_{1}x_{2}+2x_{3}^2+5x_{4}^2+3x_{3}x_{4}+2x_{5}^2+5x_{6}^2+3x_{5}x_{6}
+2x_{7}^2+5x_{8}^2+3x_{7}x_{8}+2x_{9}^2+5x_{10}^2+3x_{9}x_{10};
\\[3pt]
x\geq 0.
\end{array}
\end{eqnarray*}
Example {\tt P10\_4} (10 variables, degree 4): 
\begin{eqnarray*}
\begin{array}{ll}
f=x_{1}^4-x_{2}^4+x_{3}^4-x_{4}^4+x_{5}^4-x_{6}^4+x_{7}^4-x_{8}^4+x_{9}^4-x_{10}^4 + x_1-x_2;
\\[3pt]
g_1=2x_{1}^4+3x_{2}^2+2x_{1}x_{2}+2x_{3}^4+3x_{4}^2+2x_{3}x_{4}+2x_{5}^4+3x_{6}^2+2x_{5}x_{6}
+2x_{7}^4+3x_{8}^2+2x_{7}x_{8}+2x_{9}^4+3x_{11}^2+2x_{9}x_{10};
\\[3pt]
g_2=3x_{1}^2+2x_{2}^2-4x_{1}x_{2}+3x_{3}^2+2x_{4}^2-4x_{3}x_{4}+3x_{5}^2+2x_{6}^2-4x_{5}x_{6}
+3x_{7}^2+2x_{8}^2-4x_{7}x_{8}+3x_{9}^2+2x_{10}^2-4x_{9}x_{10};
\\[3pt]
g_3=x_{1}^2+6x_{2}^2-4x_{1}x_{2}+x_{3}^2+6x_{4}^2-4x_{3}x_{4}+x_{5}^2+6x_{6}^2-4x_{5}x_{6}
+x_{7}^2+6x_{8}^2-4x_{7}x_{8}+x_{9}^2+6x_{10}^2-4x_{9}x_{10};
\\[3pt]
g_4=x_{1}^2+4x_{2}^4-3x_{1}x_{2}+x_{3}^2+4x_{4}^4-3x_{3}x_{4}+x_{5}^2+4x_{6}^4-3x_{5}x_{6}
+x_{7}^2+4x_{8}^4-3x_{7}x_{8}+x_{9}^2+4x_{10}^4-3x_{9}x_{10};
\\[3pt]
g_5=2x_{1}^2+5x_{2}^2+3x_{1}x_{2}+2x_{3}^2+5x_{4}^2+3x_{3}x_{4}+2x_{5}^2+5x_{6}^2+3x_{5}x_{6}
+2x_{7}^2+5x_{8}^2+3x_{7}x_{8}+2x_{9}^2+5x_{10}^2+3x_{9}x_{10};
\\[3pt]
x\geq 0.
\end{array}
\end{eqnarray*}
Example {\tt P20\_2} (20 variables, degree 2):
\begin{eqnarray*}
\begin{array}{lll}
f&=&x_{1}^2-x_{2}^2+x_{3}^2-x_{4}^2+x_{5}^2-x_{6}^2+x_{7}^2-x_{8}^2+x_{9}^2-x_{10}^2+x_{11}^2-x_{12}^2 + x_1-x_2
\\
&&+x_{13}^2-x_{14}^2+x_{15}^2-x_{16}^2+x_{17}^2-x_{18}^2+x_{19}^2-x_{20}^2;
\\[3pt]
g_1&=&2x_{1}^2+3x_{2}^2+2x_{1}x_{2}+2x_{3}^2+3x_{4}^2+2x_{3}x_{4}+2x_{5}^2+3x_{6}^2+2x_{5}x_{6}+2x_{7}^2+3x_{8}^2+2x_{7}x_{8}
\\
&&+2x_{9}^2+3x_{10}^2+2x_{9}x_{10}+2x_{11}^2+3x_{12}^2+2x_{11}x_{12}+2x_{13}^2+3x_{14}^2+2x_{13}x_{14}+2x_{15}^2+3x_{16}^2
\\
&&+2x_{15}x_{16}+2x_{17}^2+3x_{18}^2+2x_{17}x_{18}+2x_{19}^2+3x_{10}^2+2x_{20}x_{20};
\\[3pt]
g_2&=&3x_{1}^2+2x_{2}^2-4x_{1}x_{2}+3x_{3}^2+2x_{4}^2-4x_{3}x_{4}+3x_{5}^2+2x_{6}^2-4x_{5}x_{6}+3x_{7}^2+2x_{8}^2-4x_{7}x_{8}
\\
&&+3x_{9}^2+2x_{10}^2-4x_{9}x_{10}+3x_{11}^2+2x_{12}^2-4x_{11}x_{12}+3x_{13}^2+2x_{14}^2-4x_{13}x_{14}
\\
&&+3x_{15}^2+2x_{16}^2-4x_{15}x_{16}+3x_{17}^2+2x_{19}^2-4x_{18}x_{18}+3x_{19}^2+2x_{20}^2-4x_{19}x_{20};
\\[3pt]
g_3&=&x_{1}^2+6x_{2}^2-4x_{1}x_{2}+x_{3}^2+6x_{4}^2-4x_{3}x_{4}+x_{5}^2+6x_{6}^2-4x_{5}x_{6}+x_{7}^2+6x_{8}^2-4x_{7}x_{8}
\\
&&+x_{9}^2+6x_{10}^2-4x_{9}x_{10}+x_{11}^2+6x_{12}^2-4x_{11}x_{12}+x_{13}^2+6x_{14}^2-4x_{13}x_{14}
\\
&&+x_{15}^2+6x_{17}^2-4x_{16}x_{16}+x_{17}^2+6x_{18}^2-4x_{17}x_{18}+x_{19}^2+6x_{20}^2-4x_{19}x_{20};
\\[3pt]
g_4&=&x_{1}^2+4x_{2}^2-3x_{1}x_{2}+x_{3}^2+4x_{4}^2-3x_{3}x_{4}+x_{5}^2+4x_{6}^2-3x_{5}x_{6}+x_{7}^2+4x_{8}^2-3x_{7}x_{8}
\\
&&+x_{9}^2+4x_{10}^2-3x_{9}x_{10}+x_{1}^2+4x_{12}^2-3x_{11}x_{12}+x_{13}^2+4x_{14}^2-3x_{15}x_{14}
\\
&&+x_{15}^2+4x_{16}^2-3x_{15}x_{16}+x_{17}^2+4x_{18}^2-3x_{17}x_{18}+x_{19}^2+4x_{20}^2-3x_{19}x_{20};
\\[3pt]
g_5&=&2x_{1}^2+5x_{2}^2+3x_{1}x_{2}+2x_{3}^2+5x_{4}^2+3x_{3}x_{4}+2x_{5}^2+5x_{6}^2+3x_{5}x_{6}+2x_{7}^2+5x_{8}^2+3x_{7}x_{8}
\\
&&+2x_{9}^2+5x_{10}^2+3x_{9}x_{10}+2x_{11}^2+5x_{13}^2+3x_{12}x_{12}+2x_{13}^2+5x_{14}^2+3x_{13}x_{14}
\\
&&+2x_{15}^2+5x_{16}^2+3x_{15}x_{16}+2x_{17}^2+5x_{18}^2+3x_{17}x_{18}+2x_{19}^2+5x_{20}^2+3x_{19}x_{20};
\\
x\geq 0.
\end{array}
\end{eqnarray*}
Example {\tt P20\_4} (20 variables, degree 4): Same as {\tt P20\_2} except that 
$f$ is replaced by
\begin{eqnarray*}
\begin{array}{lll}
f&=&x_{1}^4-x_{2}^4+x_{3}^2-x_{4}^2+x_{5}^2-x_{6}^2+x_{7}^2-x_{8}^2+x_{9}^2-x_{10}^2+x_{11}^2-x_{12}^2 + x_1-x_2
\\
&&+x_{13}^2-x_{14}^2+x_{15}^2-x_{16}^2+x_{17}^2-x_{18}^2+x_{19}^2-x_{20}^2;
\end{array}
\end{eqnarray*}
\end{scriptsize}

\subsection{Test functions for BSOS versus LP relaxations of Krivine-Stengle 
on convex problems in Table \ref{table2}}
\begin{scriptsize}
Example {\tt C4\_2} (4 variables, degree 2): 
\begin{eqnarray*}
\begin{array}{lll}
f= x_1^2+x_2^2+x_3^2+x_4^2 + 2x_1x_2-x_1-x_2;\quad
&g_1=-x_1^2-2x_2^2-x_3^2-2x_4^2+1;\\[3pt]
g_2=-2x_1^2-x_2^2-2x_3^2-x_4^2+1;\quad
&g_3=-x_1^2-4x_2^2-x_3^2-4x_4^2+1.25;\\[3pt]
g_4=-4x_1^2-x_2^2-4x_3^2-x_4^2+1.25;\quad
&g_5=-2x_1^2-3x_2^2-2x_3^2-3x_4^2+1.1; 
\quad x\geq 0.
\end{array}
\end{eqnarray*}
Example {\tt C4\_4} (4 variables, degree 4): 
\begin{eqnarray*}
\begin{array}{lll}
f= x_1^4+x_2^4+x_3^4+x_4^4 + 3x_1^2x_2^2-x_1-x_2;\quad
&g_1=-x_1^4-2x_2^4-x_3^4-2x_4^4+1;\\[3pt]
g_2=-2x_1^4-x_2^4-2x_3^4-x_4^4+1;\quad
&g_3=-x_1^4-4x_2^4-x_3^4-4x_4^4+1.25;\\[3pt]
g_4=-4x_1^4-x_2^4-4x_3^4-x_4^4+1.25;\quad
&g_5=-2x_1^4-3x_2^2-2x_3^4-3x_4^2+1.1;
\quad x\geq 0.
\end{array}
\end{eqnarray*} 
Example {\tt C4\_6} (4 variables, degree 6):
\begin{eqnarray*}
\begin{array}{lll}
f= x_1^6+x_2^6+x_3^6+x_4^6 + \frac{10}{3} x_1^3x_2^3-x_1-x_2;\quad
&g_1=-x_1^6-2x_2^6-x_3^6-2x_4^6+1;\\[3pt]
g_2=-2x_1^6-x_2^6-2x_3^6-x_4^6+1;\quad
&g_3=-x_1^6-4x_2^2-x_3^6-4x_4^2+1.25;\\[3pt]
g_4=-4x_1^6-x_2^2-4x_3^6-x_4^2+1.25;\quad
&g_5=-2x_1^2-3x_2^6-2x_3^2-3x_4^6+1.1;
\quad x\geq 0.
\end{array}
\end{eqnarray*} 
Example {\tt C6\_2} (6 variables, degree 2): 
\begin{eqnarray*}
\begin{array}{lll}
f= x_1^2+x_2^2+x_3^2+x_4^2+x_5^2+x_6^2 + 2x_1x_2-x_1-x_2;\quad
&g_1=-x_1^2-2x_2^2-x_3^2-2x_4^2-x_5^2-2x_6^2+1;\\[3pt]
g_2=-2x_1^2-x_2^2-2x_3^2-x_4^2-2x_5^2-x_6^2+1;\quad
&g_3=-x_1^2-4x_2^2-x_3^2-4x_4^2-x_5^2-4x_6^2+1.25;\\[3pt]
g_4=-4x_1^2-x_2^2-4x_3^2-x_4^2-4x_5^2-x_6^2+1.25;\quad
&g_5=-2x_1^2-3x_2^2-2x_3^2-3x_4^2-2x_5^2-3x_6^2+1.1;
\quad x\geq 0.
\end{array}
\end{eqnarray*}
Example {\tt C6\_4} (6 variables, degree 4): 
\begin{eqnarray*}
\begin{array}{lll}
f= x_1^4+x_2^4+x_3^4+x_4^4+x_5^4+x_6^4+3x_1^2x_2^2-x_1-x_2;\quad
&g_1=-x_1^4-2x_2^4-x_3^4-2x_4^4-x_5^4-2x_6^4+1;\\[3pt]
g_2=-2x_1^4-x_2^4-2x_3^4-x_4^4-2x_5^4-x_6^4+1;\quad
&g_3=-x_1^4-4x_2^4-x_3^4-4x_4^4-x_5^4-4x_6^4+1.25;\\[3pt]
g_4=-4x_1^4-x_2^4-4x_3^4-x_4^4-4x_5^4-x_6^4+1.25;\quad
&g_5=-2x_1^4-3x_2^2-2x_3^4-3x_4^2-2x_5^4-3x_6^2+1.1;
\quad x\geq 0.
\end{array}
\end{eqnarray*}
Example {\tt C6\_6} (6 variables, degree 6): 
\begin{eqnarray*}
\begin{array}{lll}
f= x_1^6+x_2^6+x_3^6+x_4^6+x_5^6+x_6^6+\frac{10}{3}x_1^2x_2^3-x_1-x_2;\quad
&g_1=-x_1^6-2x_2^6-x_3^6-2x_4^6-x_5^6-2x_6^6+1;\\[3pt]
g_2=-2x_1^6-x_2^6-2x_3^6-x_4^6-2x_5^6-x_6^6+1;\quad
&g_3=-x_1^6-4x_2^2-x_3^6-4x_4^2-x_5^6-4x_6^2+1.25;\\[3pt]
g_4=-4x_1^6-x_2^2-4x_3^6-x_4^2-4x_5^6-x_6^2+1.25;\quad
&g_5=-2x_1^2-3x_2^6-2x_3^2-3x_4^6-2x_5^2-3x_6^6+1.1;
\quad x\geq 0.
\end{array}
\end{eqnarray*}
Example {\tt C8\_2} (8 variables, degree 2): 
\begin{eqnarray*}
\begin{array}{lll}
f= x_1^2+x_2^2+x_3^2+x_4^2+x_5^2+x_6^2+x_7^2+x_8^2+2x_1x_2-x_1-x_2;\quad
&g_1=-x_1^2-2x_2^2-x_3^2-2x_4^2-x_5^2-2x_6^2-x_7^2-2x_8^2+1;\\[3pt]
g_2=-2x_1^2-x_2^2-2x_3^2-x_4^2-2x_5^2-x_6^2-2x_7^2-x_8^2+1;\quad
&g_3=-x_1^2-4x_2^2-x_3^2-4x_4^2-x_5^2-4x_6^2-x_7^2-4x_8^2+1.25;\\[3pt]
g_4=-4x_1^2-x_2^2-4x_3^2-x_4^2-4x_5^2-x_6^2-4x_7^2-x_8^2+1.25;\quad
&g_5=-2x_1^2-3x_2^2-2x_3^2-3x_4^2-2x_5^2-3x_6^2-2x_7^2-3x_8^2+1.1;
\\[3pt]
x\geq 0.
\end{array}
\end{eqnarray*}
Example {\tt C8\_4} (8 variables, degree 4): 
\begin{eqnarray*}
\begin{array}{lll}
f= x_1^4+x_2^4+x_3^4+x_4^4+x_5^4+x_6^4+x_7^4+x_8^4+3x_1^2x_2^2-x_1-x_2;\quad
&g_1=-x_1^4-2x_2^4-x_3^4-2x_4^4-x_5^2-2x_6^4-x_7^4-2x_8^4+1;\\[3pt]
g_2=-2x_1^4-x_2^4-2x_3^4-x_4^4-2x_5^2-x_6^4-2x_7^4-x_8^4+1;\quad
&g_3=-x_1^4-4x_2^4-x_3^4-4x_4^4-x_5^4-4x_6^4-x_7^4-4x_8^4+1.25;\\[3pt]
g_4=-4x_1^4-x_2^4-4x_3^4-x_4^4-4x_5^4-x_6^4-4x_7^4-x_8^4+1.25;\quad
&g_5=-2x_1^4-3x_2^2-2x_3^4-3x_4^2-2x_5^4-3x_6^2-2x_7^4-3x_8^2+1.1;
\\[3pt] x\geq 0.
\end{array}
\end{eqnarray*}
Example {\tt C10\_2} (10 variables, degree 2): 
\begin{eqnarray*}
\begin{array}{lll}
f&=& x_1^2+x_2^2+x_3^2+x_4^2+x_5^2+x_6^2+x_7^2+x_8^2+x_9^2+x_{10}^2
+2x_1x_2-x_1-x_2;
\\[3pt]
g_1&=&-x_1^2-2x_2^2-x_3^2-2x_4^2-x_5^2-2x_6^2-x_7^2-2x_8^2-x_9^2-2x_{10}^2+1;
\\[3pt]
g_2&=&-2x_1^2-x_2^2-2x_3^2-x_4^2-2x_5^2-x_6^2-2x_7^2-x_8^2-2x_9^2-x_{10}^2+1;
\\[3pt]
g_3&=&-x_1^2-4x_2^2-x_3^2-4x_4^2-x_5^2-4x_6^2-x_7^2-4x_8^2-x_9^2-4x_{10}^2+1.25;\\[3pt]
g_4&=&-4x_1^2-x_2^2-4x_3^2-x_4^2-4x_5^2-x_6^2-4x_7^2-x_8^2-4x_9^2-x_{10}^2+1.25;\\[3pt]
g_5&=&-2x_1^2-3x_2^2-2x_3^2-3x_4^2-2x_5^2-3x_6^2-2x_7^2-3x_8^2-2x_9^2
-3x_{10}^2+1.1;
\\[3pt] x\geq 0.
\end{array}
\end{eqnarray*}
Example {\tt C10\_4} (10 variables, degree 4): 
\begin{eqnarray*}
\begin{array}{lll}
f&=& x_1^4+x_2^4+x_3^4+x_4^4+x_5^4+x_6^4+x_7^4+x_8^4+x_9^4+x_{10}^4
+3x_1^2x_2^2-x_1-x_2;
\\[3pt]
g_1&=&-x_1^4-2x_2^4-x_3^4-2x_4^4-x_5^4-2x_6^4-x_7^4-2x_8^4-x_9^4-2x_{10}^4+1;
\\[3pt]
g_2&=&-2x_1^4-x_2^4-2x_3^4-x_4^4-2x_5^4-x_6^4-2x_7^4-x_8^4-2x_9^4-x_{10}^4+1;
\\[3pt]
g_3&=&-x_1^4-4x_2^4-x_3^4-4x_4^4-x_5^4-4x_6^4-x_7^4-4x_8^4-x_9^4-4x_{10}^4+1.25;\\[3pt]
g_4&=&-4x_1^4-x_2^4-4x_3^4-x_4^4-4x_5^4-x_6^4-4x_7^4-x_8^4-4x_9^4-x_{10}^4+1.25;\\[3pt]
g_5&=&-2x_1^4-3x_2^2-2x_3^4-3x_4^2-2x_5^4-3x_6^2-2x_7^4-3x_8^2-2x_9^4
-3x_{10}^2+1.1;
\\[3pt]
x\geq 0.
\end{array}
\end{eqnarray*}
Example {\tt C20\_2} (20 variables, degree 2): 
\begin{eqnarray*}
\begin{array}{lll}
f&=&x_1^2+x_2^2+x_3^2+x_4^2+x_5^2+x_6^2+x_7^2+x_8^2+x_9^2+x_{10}^2
+ 2x_1x_2-x_1-x_2
\\
&&+x_{11}^2+x_{12}^2+x_{13}^2+x_{14}^2+x_{15}^2+x_{16}^2+x_{17}^2+x_{18}^2+x_{19}^2+x_{20}^2;\\[3pt]
g_1&=&-x_1^2-2x_2^2-x_3^2-2x_4^2-x_5^2-2x_6^2-x_7^2-2x_8^2-x_9^2-2x_{10}^2
\\
&&-x_{11}^2-2x_{12}^2-x_{13}^2-2x_{14}^2-x_{15}^2-2x_{16}^2-x_{17}^2-2x_{18}^2-x_{19}^2-2x_{20}^2+1;
\\[3pt]
g_2&=&-2x_1^2-x_2^2-2x_3^2-x_4^2-2x_5^2-x_6^2-2x_7^2-x_8^2-2x_9^2-x_{10}^2
\\
&&
-2x_{11}^2-x_{12}^2-2x_{13}^2-x_{14}^2-2x_{15}^2-x_{16}^2-2x_{17}^2-x_{18}^2-2x_{19}^2-x_{20}^2+1;
\\[3pt]
g_3&=&-x_1^2-4x_2^2-x_3^2-4x_4^2-x_5^2-4x_6^2-x_7^2-4x_8^2-x_9^2-4x_{10}^2
\\
&&-x_{11}^2-4x_{12}^2-x_{13}^2-4x_{14}^2-x_{15}^2-4x_{16}^2-x_{17}^2-4x_{18}^2-x_{19}^2-4x_{20}^2+1.25;
\\[3pt]
g_4&=&-4x_1^2-x_2^2-4x_3^2-x_4^2-4x_5^2-x_6^2-4x_7^2-x_8^2-4x_9^2-x_{10}^2
\\
&&-4x_{11}^2-x_{12}^2-4x_{13}^2-x_{14}^2-4x_{15}^2-x_{16}^2-4x_{17}^2-x_{18}^2-4x_{19}^2-x_{20}^2+1.25;
\\[3pt]
g_5&=&-2x_1^2-3x_2^2-2x_3^2-3x_4^2-2x_5^2-3x_6^2-2x_7^2-3x_8^2-2x_9^2-3x_{10}^2
\\
&&-2x_{11}^2-3x_{12}^2-2x_{13}^2-3x_{14}^2-2x_{15}^2-3x_{16}^2-2x_{17}^2
-3x_{18}^2-2x_{19}^2-3x_{20}^2+1.1;
\\ 
x\geq 0.
\end{array}
\end{eqnarray*}
\end{scriptsize}


\end{document}